\documentclass{amsart}
\usepackage{amsmath,amssymb,graphicx}
\usepackage[latin1]{inputenc}
\usepackage[latin1]{inputenc}
\usepackage[english]{babel}
\usepackage{xcolor}

\newtoks\nazev
\newtheorem{thm}{Theorem}[section]
\newtheorem{theorem}[thm]{Theorem}

\newtheorem{claim}[thm]{Claim}
\newtheorem{prop}[thm]{Proposition}
\newtheorem{cor}[thm]{Corollary}
\newtheorem{example}[thm]{Example}
\newtheorem{question}[thm]{Question}

\theoremstyle{definition}
\newtheorem{definition}[thm]{Definition}

\def\eqn#1$$#2$${\begin{equation}\label#1#2\end{equation}}

\def\d{\,\mbox{\rm d}}

\def\th{{\leavevmode\setbox1=\hbox{t}%
  \hbox to \wd1{t\kern-0.6ex{\char039}\hss}}}%
\def\dh{{\leavevmode\setbox1=\hbox{d}%
  \hbox to 1.05\wd1{d\kern-0.4ex{\char039}\hss}}}%
\def\=#1{\if #1u{\accent23u}\else
\ifx #1d{\dh}\else \ifx #1t{\th}\else
 {\accent20 #1}\fi\fi\fi}
\def\'#1{\if #1i{\accent19\i}\else {\accent19 #1}\fi}

\def\N{\mathcal N}

\def\bd{\operatorname{Ja}}
\def\ep{\varepsilon}

\def\en{\mathbb N}
\def\er{\mathbb R}

\def\dist{\operatorname{dist}}
\def\dh{\,\widehat{\mbox{\rm d}}}

\def \conv {\operatorname{conv}}

\def \ext {\operatorname{ext}}

\def \reg {\partial _{\kern1pt\text{reg}}}

\def\wscl#1{\overline{#1}^{w^*}}

\newtoks\by
\newtoks\paper
\newtoks\book
\newtoks\jour
\newtoks\yr
\newtoks\pages
\newtoks\vol
\newtoks\publ
\newtoks\eds
\newtoks\proc
\newtoks\mathrev
\newtoks\web
\def\ota{{\hbox{???}}}
\def\cLear{\by=\ota\paper=\ota\book=\ota\jour=\ota\yr=\ota
\pages=\ota\vol=\ota\publ=\ota}
\def\endpaper{\the\by, \textit{\the\paper},
{\the\jour} \textbf{\the\vol} (\the\yr), \the\pages.\cLear}
\def\endbook{\the\by, \textit{\the\book}, \the\publ, \the\yr.\cLear}
\def\endprep{\the\by, \textit{\the\paper}, \the\jour.}
\def\endtoappear{\the\by, \textit{\the\paper}, to appear in \the\jour.}
\def\endprepkma{\the\by, \textit{\the\paper}, \the\jour, available on http://adela.\-karlin.\-mff.\-cuni.\-cz/\~{ }rokyta/\-preprint/\-index.php.\cLear}
\def\endproc{\the\by, \textit{\the\paper}, \the\book,
\the\publ, \the\yr, \the\pages.\cLear}
\def\endper{\the\by, \textit{personal communication}.\cLear}



\def\cC{{{\mathcal C}}}

\def\epsilon{\varepsilon}

\def\natnums{\mathbb N}
\def\reals{\mathbb R}

\def\N{\natnums}

\newcommand{\cke}{\operatorname{ck_E}}
\newcommand{\ck}{\operatorname{ck}}
\newcommand{\Jae}{\operatorname{Ja_E}}
\newcommand{\Ja}{\operatorname{Ja}}

\def\clust{\mathrm{clust}}
\newcommand{\co}{\operatorname{co}}

\newcommand{\neleq}{\rotatebox{45}{$\leq$}}
\newcommand{\seleq}{\rotatebox{325}{$\leq$}}

\begin{document}
\title{A quantitative version of James' compactness theorem}
\author{Bernardo Cascales, Ond\v{r}ej F.K. Kalenda and Ji\v{r}\'{\i} Spurn\'y}

\address{Depto de Matem\'aticas.
Universidad de Murcia. 30.100 Espinardo. Murcia, Spain}
\email{beca@um.es}

\address{Charles University\\Faculty of Mathematics and Physics\\
Department of Mathematical Analysis\\
Sokolovsk\'a~83\\
186~75 Praha~8\\
Czech Republic}

\email{kalenda@karlin.mff.cuni.cz}
\email{spurny@karlin.mff.cuni.cz}
\thanks{The research of B. Cascales  was supported by FEDER and MEC Project MTM2008-05396 and by Fundaci\'on S\'eneca  (CARM), project 08848/PI/08. The research of O. Kalenda and J. Spurn\'y is supported by the project MSM 0021620839 financed by MSMT and partly supported by the research grant GAAV IAA 100190901.}
\subjclass[2010]{46B50}

\keywords{Banach space, measure of weak non-compactness, James' compactness theorem}

\begin{abstract} We introduce two measures of weak non-compactness $\Jae$ and $\bd$ that quantify, via distances, the idea of boundary behind James' compactness theorem. These measures tell us, for a  bounded subset $C$ of a Banach space $E$  and for given $x^*\in E^*$, how far from $E$ or $C$ one needs to go to find  $x^{**}\in \overline{C}^{w^*}\subset E^{**}$ with $x^{**}(x^*)=\sup x^* (C)$. A quantitative version of James' compactness theorem is proved using  $\Jae$ and $\bd$, and in particular it yields the following result: {\it Let $C$ be a closed convex bounded subset of a Banach space $E$ and $r>0$. If there is an element $x_0^{**}$ in $\wscl C$ whose distance to $C$ is greater than $r$, then there is $x^*\in E^*$ such that each $x^{**}\in\wscl C$ at which $\sup x^*(C)$ is attained has distance to $E$ greater than $r/2$.}  We indeed establish that $\Jae$ and $\bd$ are equivalent to other measures of weak non-compactness studied in the literature.
We also collect particular cases and examples showing when the inequalities between the different measures of weak non-compactness can be equalities and when the inequalities are sharp.
\end{abstract}

\maketitle

\section{Introduction}

The celebrated James' compactness theorem says that a closed convex subset $C$ of a Banach space $E$ is weakly compact whenever each $x^*\in E^*$ attains its supremum on $C$, see~\cite{jam}. In particular, $E$ is reflexive whenever each $x^*\in E^*$ attains its norm at some point of the closed unit ball $B_E$ of $E$. In the present paper we prove a quantitative version of this theorem. Such a result not only fits into the recent research on quantitative versions of various famous theorems on compactness presented amongst others in \cite{ang-cas1,cas-alt11,FHMZ,sua-alt,sua,sua-san}, to which we relate our results here too, but also yields a strengthening of James' theorem itself. In particular we get the following result:

\begin{thm}\label{int-j-convex}
Let $E$ be a Banach space, $C\subset E$ a closed convex bounded
set which is not weakly compact. Let $0\le c<\frac12\dh(\wscl{C},C)$ be arbitrary. Then there is some $x^*\in E^*$ such that for any $x^{**}\in\wscl C$ satisfying $x^{**}(x^*)=\sup x^*(C)$ we have $\dist(x^{**},{E})>c$.
\end{thm}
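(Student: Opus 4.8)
By the definition of the measure $\Jae$, the displayed statement is equivalent to the quantitative James inequality
\[
  \dh(\wscl C,C)\ \le\ 2\,\Jae(C),
\]
where, for a bounded set $C$, $\Jae(C)=\sup\{\dist(F_{x^*},E):x^*\in B_{E^*}\}$ and $F_{x^*}=\{x^{**}\in\wscl C:x^{**}(x^*)=\sup x^*(C)\}$ is the nonempty weak*-compact face of $\wscl C$ on which $x^*$ attains its maximum (nonempty and equal to this maximum by weak*-compactness of $\wscl C$ and weak*-density of $C$ in it). So the plan is first to record the easy reduction: if $0\le c<\tfrac12\dh(\wscl C,C)$ then the inequality forces $c<\Jae(C)$, so some $x^*\in B_{E^*}$ has $\dist(F_{x^*},E)>c$, and that $x^*$ is exactly the one asked for, since every $x^{**}\in\wscl C$ with $x^{**}(x^*)=\sup x^*(C)$ lies in $F_{x^*}$. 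One also notes that the hypothesis is not vacuous: a closed convex bounded $C$ that is not weakly compact has $\wscl C\not\subseteq E$, hence $\dh(\wscl C,C)\ge\dh(\wscl C,E)>0$. The real work is the inequality $\dh(\wscl C,C)\le2\Jae(C)$ itself (together with the trivial reverse bounds $\Jae(C)\le\Ja(C)\le\dh(\wscl C,C)$, which then sandwich all three quantities).

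To prove $\dh(\wscl C,C)\le2\Jae(C)$, put $\delta=\Jae(C)$, fix $x_0^{**}\in\wscl C$, and aim at $\dist(x_0^{**},C)\le2\delta$; I would argue by contradiction, assuming $\dist(x_0^{**},C)\ge2\delta+4\eta$ for some $\eta>0$, and run a quantitative version of James' argument through Simons' inequality. The set-up has two ingredients. First, since $x_0^{**}$ sits in $\wscl C$ at norm-distance $\ge2\delta+4\eta$ from the closed convex set $C$ --- a distance which, because $C$ is not weak*-closed in $E^{**}$, cannot be realized by any single functional of $E^*$ --- one extracts, from a net in $C$ converging weak* to $x_0^{**}$ together with an $\ell^1$/Rosenthal-type selection, a normalized sequence $(x_n^*)$ in $E^*$ that ``detects'' this distance, in the sense that $\sup\{\limsup_n x^{**}(x_n^*):x^{**}\in\wscl C\}$ exceeds the pointwise quantity $\sup\{\limsup_n x_n^*(y):y\in C\}$ by more than $2\delta+3\eta$. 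Second, the ``almost boundary'' $Y=\{x^{**}\in\wscl C:\dist(x^{**},E)\le\delta+\varepsilon\}$ has, by the very meaning of $\delta=\Jae(C)$, the property that every convex combination $x^*\in\co\{x_n^*:n\}$ (which has norm at most $1$) satisfies $F_{x^*}\cap Y\ne\emptyset$; i.e. $\sup x^*(C)$ is attained on $Y$. Simons' inequality, applied to the sequence $(x_n^*)$, the weak*-compact set $\wscl C$ and the set $Y$, then yields $\sup\{\limsup_n x^{**}(x_n^*):x^{**}\in\wscl C\}=\sup\{\limsup_n x^{**}(x_n^*):x^{**}\in Y\}$. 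Finally one bounds the right-hand side: each $x^{**}\in Y$ lies within $\delta+\varepsilon$ of some $y\in E$, which shifts the $\limsup$ by at most $\delta+\varepsilon$, and $y$ in turn lies, in the relevant weak sense, in the closure of the convex combinations of a tail of the detecting sequence, so a quantitative double-limit (Grothendieck/oscillation) estimate brings it within a further $\delta+\varepsilon$ of $C$. Altogether the right-hand side is at most $\sup\{\limsup_n x_n^*(y):y\in C\}+2(\delta+\varepsilon)$, which for $\varepsilon<\eta$ contradicts the detecting property of $(x_n^*)$.

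The step I expect to be the main obstacle is precisely the arithmetic that produces the constant $2$ and nothing worse. There are two genuine losses, and they must be arranged to add rather than to compound: first, because $C$ is norm-closed but not weak*-closed in $E^{**}$, the distance $\dist(x_0^{**},C)$ is invisible to individual functionals and has to be accessed through sequences drawn from $C$ and a quantitative double-limit argument --- this is why the proof must go through weak*-convergent nets and $\ell^1$-extractions instead of a one-line Hahn--Banach separation; second, the faces $F_{x^*}$ are known only to come within $\delta$ of $E$, not of $C$, which costs the second $\delta$. Synchronizing the inductive construction underlying Simons' inequality with both approximation processes, so that all accumulated $\varepsilon$-errors remain absorbable, is the delicate part that makes the quantitative theorem substantially harder than the classical James theorem, whose hypothesis ``each $x^*$ attains its supremum on $C$'' is purely qualitative.
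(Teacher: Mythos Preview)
Your reduction is correct: the statement is equivalent to the inequality $\dh(\wscl C,C)\le 2\,\Jae(C)$, and the paper obtains Theorem~\ref{int-j-convex} exactly this way, as an immediate consequence of Corollary~\ref{cor:AllPossibleInequalities}. The paper, however, does \emph{not} prove this inequality directly. It factors it as $\dh(\wscl C,C)\le\gamma(C)$ (a short Hahn--Banach/Goldstine argument in $E^{***}$, Proposition~\ref{int-ineq-known}) together with $\gamma(H)\le 2\,\Jae(H)$ (Theorem~\ref{prop-pryce}), and the latter is proved by running Pryce's proof of James' theorem quantitatively: Pryce's diagonal lemma is used to arrange $p(f-\liminf_i f_i)=p(f-\limsup_i f_i)$ on the span of the $f_i$, then an inductive construction produces a single functional $g=\sum\beta_i(g_i-g_0)$ whose maximizers $u\in\wscl C$ satisfy $\liminf_n(g_n-g_0)(u)\ge r'$; the factor $2$ appears only in the last line, because both $g_n$ and the weak$^*$ cluster point $g_0$ have norm at most $1$.

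Your route via Simons' inequality is genuinely different, but as written it has a gap at the step you yourself flag as delicate. After Simons gives $\sup_{\wscl C}\limsup_n x_n^{*}=\sup_{Y}\limsup_n x_n^{*}$ and you pass from $x^{**}\in Y$ to $y\in E$ at cost $\delta+\varepsilon$, you assert that ``$y$ in turn lies, in the relevant weak sense, in the closure of the convex combinations of a tail of the detecting sequence, so a quantitative double-limit estimate brings it within a further $\delta+\varepsilon$ of $C$.'' This sentence does not type-check (the detecting sequence lies in $E^{*}$, while $y\in E$), and more importantly there is no mechanism that bounds $\limsup_n x_n^{*}(y)$ for an \emph{arbitrary} $y\in E$ in terms of $\sup_{z\in C}\limsup_n x_n^{*}(z)$: the point $y$ is merely close to $\wscl C$, not to $C$. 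The Simons argument you outline naturally yields $\gamma_0(C)\le\Jae(C)$ (take $(x_n^{*})$ weak$^*$-null, so $\limsup_n x_n^{*}(y)=0$ for every $y\in E$ and there is \emph{no} second loss); the paper records exactly this in the proof of Theorem~\ref{thm-angelic}. But $\gamma_0$ does not control $\dh(\wscl C,C)$ in general (think of a Grothendieck space), so one cannot close the loop this way. What forces the constant $2$ in the paper is precisely that the sequence $(f_i)$ need not be weak$^*$-null, whence a nonzero cluster point $g_0$ survives to the final estimate; your sketch tries to absorb this same obstruction into a second $\delta+\varepsilon$ without producing the functional $g_0$ or an analogue of Pryce's replacement lemma (Claim~\ref{cl4}), and that is where the argument breaks. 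Minor point: the ``$\ell^1$/Rosenthal-type selection'' is not what produces the detecting sequence; the construction of $(x_n^{*})$ from $\dist(x_0^{**},C)>r$ is a Hahn--Banach separation in $E^{***}$ followed by Goldstine approximations, as in the proof of $\dh(\wscl C,C)\le\gamma(C)$ in Proposition~\ref{int-ineq-known}.
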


This is our notation: if $A$ and $B$ are nonempty subsets of a Banach space $E$, then $d(A,B)$ denotes the
\emph{usual $\inf$ distance} between $A$ and $B$  and the \emph{Hausdorff non-symmetrized distance} from $A$ to $B$ is defined by
    $$
        \dh(A,B)= \sup\{d(a,B): a\in A\}.
    $$
Notice that $\dh(A,B)$ can be different from $\dh(B,A)$ and that $\max\{\dh(A,B),\dh(B,A)\}$ is the Hausdorff distance between $A$ and $B$.
Notice further that $\dh(A,B)=0$ if and only if $A\subset\overline B$ and that
    \begin{equation*}
        \label{dhinf}
        \dh(A,B)=\inf \{\ep>0: A\subset B+\ep B_E\}.
    \end{equation*}
Let us remark that we consider the space $E$ canonically embedded into its bidual $E^{**}$ and that
by $\wscl{C}$ we mean the weak* closure of $C$ in the bidual $E^{**}$.

\medskip
When applying Theorem~\ref{int-j-convex} for $c=0$ we obtain the classical James' compactness theorem. Our results in this paper go beyond Theorem~\ref{int-j-convex}. We should stress that  what we really do in this paper is to introduce several measures of weak non-compactness in Banach spaces related to distances to boundaries and then study their relationship with other well known measures of weak non-compactness previously studied. Our main result is Theorem~\ref{prop-pryce}. Combination with known or easy results gives Corollary~\ref{cor:AllPossibleInequalities}.
Theorem~\ref{int-j-convex} is then an immediate consequence.

\medskip
The quantities that we introduce are the following:

\begin{definition}  Given a bounded subset $H$ of a Banach space $E$ we define:
\begin{align*}
  \Jae(H)=\inf \{\epsilon > 0: \text{ for every } x^*\in E^*, \text{ there is } x^{**}\in
  \overline{H}^{w^*} \\ \text{ such that }x^{**}(x^*)=\sup x^*(H) \text{ and } d(x^{**},E)\leq \epsilon\}
\end{align*}
and
\begin{align*}
  \bd(H)=\inf \{\epsilon > 0: \text{ for every } x^*\in E^*, \text{ there is } x^{**}\in
  \overline{H}^{w^*} \\ \text{ such that }x^{**}(x^*)=\sup x^*(H) \text{ and } d(x^{**},H)\leq \epsilon\}.
\end{align*}
\end{definition}

Note that the definition of $\bd(H)$ is clearly inspired by the notion of a {\em boundary} that is hidden in James' theorem. Recall that if  $Y$ is a Banach space and $K\subset Y^*$ is a convex weak*-compact set, then a subset $B\subset K$ is called a {\it boundary} of $K$ if for each $y\in Y$ there is $b^*\in B$ such that
        \[
            b^*(y)=\sup_{k^*\in K} k^*(y).
        \]

James' compactness theorem can be rephrased now in the following way: {\em let $E$ be a Banach space and $C\subset E$ a bounded closed convex set; if $C$ is a boundary of $\wscl{C}$, then $C$ is weakly compact.}

We will study the relationship of $\Jae(C)$ and $\bd(C)$ to other quantities measuring weak non-compactness of $C$. The two most obvious quantities of this kind are	
$\dh(\wscl{C},C)$ and $\dh(\wscl{C},E)$. We stress that these two quantities can be different (see, e.g. examples in Section~\ref{examples}). The first one can be called `measure of weak non-compactness' of $C$, the other one can be called `measure of relative weak non-compactness' of $C$.

Using the notation introduced above, Theorem~\ref{int-j-convex} says that the inequality $\Jae(C)\ge\frac12\dh(\wscl C,C)$ holds for any closed convex bounded subset $C$ of a Banach space $E$.

In the following section we introduce several other quantities measuring weak non-compactness and sum up easy inequalities among them. In Section~\ref{S-pryce}
we formulate and prove our main result. As a corollary we obtain that all considered quantities measuring weak non-compactness are equivalent.

In Section~\ref{S-krein} we discuss the relationship to the quantitative version of Krein's theorem. Section~\ref{examples} contains examples showing that most of the inequalities are sharp. In the final section we study some particular cases in which some of the inequalities become equalities.

\section{Measures of weak non-compactness}\label{S-measures}

In this section we define and relate several quantities measuring weak non-compactness of a bounded set in a Banach space. Such quantities are called {\it measures of weak non-compactness}.
Measures of non-compactness or weak non-compactness
have been successfully applied to study of compactness, in operator theory, differential
equations and integral equations, see for instance
\cite{ang-cas1,ast-tyl,blasi,cas-alt11,FHMZ,sua-alt,sua,sua-san,kry,kr-pr-sc}. An axiomatic approach to measures of weak non-compactness may be found in \cite{ban-mar,kr-pr-sc}. But many of the natural quantities do not satisfy all the axioms, so we will not adopt this approach. Anyway, there is one property which should be pointed out: A measure of weak non-compactness should have
value zero if and only if the respective set is relatively weakly compact.

\medskip
Let $(x_n)$ be a bounded sequence in a Banach space $E$. We define
$\clust_{E^{**}}((x_n))$ to be the set of all cluster points of this sequence  in $(E^{**},w^*)$, i.e.
    $$
        \clust_{E^{**}}((x_n))=\bigcap_{n\in\N}\overline{\{x_m:m>n\}}^{w^*}.
    $$

Given a bounded subset $H$ of a Banach space $E$ we define:
 $$
            \gamma (H)= \sup\{|\lim_n\lim_m x^*_m(x_n)- \lim_m\lim_n x^*_m(x_n)|:
            (x^*_m)\subset B_{E^*}, (x_n) \subset H
            \},
 $$
assuming the involved limits exist,
 $$
           \cke (H)= \sup_{(x_n)\subset H}\d(\clust_{E^{**}}((x_n)),E), \ \ \  \ck (H)= \sup_{(x_n)\subset H}\d(\clust_{E^{**}}((x_n)),H).
 $$

Properties of  $\gamma$ can be found in~\cite{ang-cas1,ast-tyl,cas-alt11,FHMZ,kr-pr-sc} whereas $\cke$ can be found in~\cite{ang-cas1} -- note that $\cke$ is denoted as $\ck$ in that paper; do not mistake  it for $\ck$ above.

So, for a bounded set $H\subset E$ we have the following quantities measuring  weak non-compactness:
$$\dh(\wscl H,H),\dh(\wscl H,E),\ck(H),\cke(H),\gamma(H),\Ja(H),\Jae(H).$$
Let us stress on the different nature of these quantities:

First, the quantities
$\dh(\wscl H,H)$, $\ck(H)$, $\gamma(H)$ and $\Ja(H)$ do not depend directly on the space $E$. More exactly, if $F$ is a Banach space and $H\subset E\subset F$, where $E$ is a closed linear subspace of $F$ and $H$ a bounded subset of $E$, then these quantities are the same, no matter whether we consider $H$ as a subset of $E$ or as a subset of $F$. This is trivial for $\dh(\wscl H,H)$, and $\ck(H)$ and follows from the Hahn-Banach extension theorem for
$\gamma(H)$ and $\Ja(H)$.

On the other hand, the quantities $\dh(\wscl H,E)$, $\cke(H)$ and $\Jae(H)$ may decrease if the space $E$ is enlarged. More exactly, if $H\subset E\subset F$ are as above, then it may happen that $\dh(\wscl H,F)<\dh(\wscl H,E)$ and similarly for the other quantities (see examples in Section~\ref{examples}).

Since we are interested in James' compactness theorem, the most important case for us is the case of a closed convex bounded set $H$. Nonetheless, we define the quantities for an arbitrary bounded set and formulate results as general as possible. Anyway, such generalization do not yield really new results in view of the following proposition.

\begin{prop}\label{prop-nonconvex} Let $E$ be a Banach space and $H\subset E$ a bounded subset.
\begin{itemize}
	\item[(i)] All the above defined quantities have the same value for $H$ and for $\overline{H}$.
	\item[(ii)] The quantities $\dh(\wscl H,E)$, $\Jae(H)$ and $\gamma(H)$ have the same value for $H$ and for the weak closure of $H$.
	\item[(iii)] $\Jae(\co H)\le\Jae(H)$ and $\gamma(\co H)=\gamma(H)$.
\end{itemize}
\end{prop}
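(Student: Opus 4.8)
The plan is to handle the three items essentially independently, each time reducing the claim to the definitions of the quantities together with the elementary observation that $\wscl H=\wscl{\overline H}=\wscl{\overline H^{\,w}}$ and $\wscl{\co H}=\wscl{\overline{\co} H}$. For item (i), I would argue that replacing $H$ by its norm-closure $\overline H$ changes none of the seven quantities. For $\dh(\wscl H,H)$ this is immediate since $\wscl{\overline H}=\wscl H$ and $d(x^{**},\overline H)=d(x^{**},H)$; likewise for $\dh(\wscl H,E)$. For $\ck,\cke,\gamma$ one uses that any sequence in $\overline H$ can be approximated coordinatewise (in the relevant pointwise/weak* sense, uniformly on $B_{E^*}$) by a sequence in $H$: given $(x_n)\subset\overline H$ pick $y_n\in H$ with $\|x_n-y_n\|<1/n$; then $(y_n)$ and $(x_n)$ have the same weak* cluster points in $E^{**}$ up to the null perturbation, and the iterated-limit expression defining $\gamma$ is perturbed by at most $\|x_n-y_n\|\to0$. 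For $\Ja$ and $\Jae$ I would note that $\sup x^*(\overline H)=\sup x^*(H)$ for every $x^*$, so the suprema that must be attained by some $x^{**}\in\wscl H$ are literally the same, and $d(x^{**},\overline H)=d(x^{**},H)$.

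For item (ii), the point is that passing from $H$ to its weak closure $\overline H^{\,w}$ still does not enlarge $\wscl H$ (the weak closure of $H$ in $E$ has the same weak* closure in $E^{**}$ as $H$, since $E$ is weak*-dense-in-itself relative to these closures — more precisely $\overline{H}^{\,w}\subset\wscl H\cap E$, hence $\wscl{\overline H^{\,w}}\subset\wscl{\wscl H}=\wscl H$, and the reverse inclusion is trivial). Given that, $\dh(\wscl H,E)$ depends only on $\wscl H$, so it is unchanged; and for $\Jae$ the suprema $\sup x^*(\overline H^{\,w})=\sup x^*(H)$ agree (a weakly convergent net does not increase the value of a fixed $x^*$ beyond its supremum on $H$), and the witnessing $x^{**}$ ranges over the same set $\wscl H$ with the same distance to $E$. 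For $\gamma$, sequences in $\overline H^{\,w}$ need a little more care: I would take $(x_n)\subset\overline H^{\,w}$ and a sequence $(x_m^*)\subset B_{E^*}$ with the iterated limits existing; since each $x_n$ is a weak limit of a net in $H$, a diagonal/approximation argument (choosing for each $n$ a point $y_n\in H$ agreeing with $x_n$ up to $\ep_n\to 0$ on the finitely many functionals $x_1^*,\dots,x_n^*$) produces $(y_n)\subset H$ with the same iterated limits, hence $\gamma(\overline H^{\,w})\le\gamma(H)$; the reverse is trivial.

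For item (iii), the inequality $\Jae(\co H)\le\Jae(H)$ should follow because $\wscl{\co H}=\overline{\co}^{\,w^*}(\wscl H)$ contains $\wscl H$, and for a fixed $x^*$, $\sup x^*(\co H)=\sup x^*(H)$; thus any $x^{**}\in\wscl H$ attaining $\sup x^*(H)$ with $d(x^{**},E)\le\ep$ also lies in $\wscl{\co H}$ and attains $\sup x^*(\co H)$ there — so every $\ep$ admissible for $H$ is admissible for $\co H$. The equality $\gamma(\co H)=\gamma(H)$ is the one nontrivial assertion: $\ge$ is clear from monotonicity, and for $\le$ one uses the standard convexity trick for the $\gamma$ functional — an element of $\co H$ is a finite convex combination of elements of $H$, and the double-limit defect $\gamma$ is, by a Rosenthal-type / Grothendieck-type argument, invariant under passing to convex combinations (this is precisely the known fact, cited in the paper from \cite{ang-cas1,ast-tyl,cas-alt11,FHMZ,kr-pr-sc}, that $\gamma$ coincides with the "oscillation" quantity that is manifestly convexity-stable, or one invokes that $\gamma(H)=\gamma(\overline{\co}^{\,w^*}\!H\cap E,\dots)$-type identity). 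I expect the main obstacle to be exactly this last equality $\gamma(\co H)=\gamma(H)$: the clean way is to cite the characterization of $\gamma$ via $\sup\{d(\wscl{\co}_{w^*}\!A,E): A\subset H\}$-style formulas or the interchange-of-limits lemma from the literature, rather than manipulating convex combinations of double limits by hand, which is delicate because the two iterated limits transform differently.
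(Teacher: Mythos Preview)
Your handling of (i) and (iii) matches the paper's: (i) is declared obvious there, and for (iii) the paper likewise calls the first inequality trivial and cites \cite[Theorem 13]{FHMZ} (with \cite[Theorem 3.3]{cas-alt11} as an alternative) for the nontrivial equality $\gamma(\co H)=\gamma(H)$. Your treatment of (ii) for $\dh(\wscl H,E)$ and $\Jae$ is also fine and is what the paper calls ``trivial''.

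The gap is in your direct argument for $\gamma(\overline H^{\,w})=\gamma(H)$. Choosing $y_n\in H$ with $|x_m^*(x_n-y_n)|<\varepsilon_n$ for $m\le n$ does give $\lim_n x_m^*(y_n)=\lim_n x_m^*(x_n)$ for each fixed $m$, so the iterated limit $\lim_m\lim_n$ is preserved. But for the other iterated limit $\lim_n\lim_m x_m^*(y_n)$ you need, for each fixed $n$, control of $x_m^*(y_n)$ as $m\to\infty$, i.e.\ for $m>n$, where you have none. Passing to subsequences of $(x_m^*)$ to make $\lim_m x_m^*(y_n)$ exist does not help: the resulting limits $a'_n$ have no reason to agree with $a_n=\lim_m x_m^*(x_n)$, because the discrepancy $x_m^*(y_n-x_n)$ is uncontrolled precisely on the tail in $m$ that determines these limits. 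And you cannot upgrade the choice of $y_n$ to approximate $x_n$ on \emph{all} the $x_m^*$ simultaneously, since that is not a weak-neighborhood condition.

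The paper avoids this entirely: it proves (iii) first and then deduces (ii) for $\gamma$ from (i) and (iii) via the chain
\[
\gamma(H)\le\gamma(\overline H^{\,w})\le\gamma(\overline{\co}\,H)=\gamma(\co H)=\gamma(H),
\]
using $\overline H^{\,w}\subset\overline{\co}\,H$. You already have all the pieces for this; just reorder the argument.
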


\begin{proof} The assertion (i) is obvious. Let us proceed with the assertion (iii).
The first inequality is trivial. The second equality is not easy at all, it
is proved in \cite[Theorem 13]{FHMZ} -- see \cite[Theorem 3.3]{cas-alt11} for a different proof.

Finally, let us show (ii). The case of $\gamma(H)$ follows from (i) and (iii). The other cases are trivial.
\end{proof}

As for the quantities not covered by this proposition it seems not to be clear whether $\cke(H)$ has the same value for $H$ and for the weak closure of $H$.
The quantities $\cke(H)$ and $\dh(\wscl H,E)$ may increase when passing to $\co H$: this follows from results of \cite{sua} and \cite{sua-alt}, see Example~\ref{exa-ghm}.

We do not know whether the quantity $\Jae(H)$ may really decrease when passing to $\co H$. This question seems not to be easy. Indeed, in view of the obvious
inequalities $\Jae(\co H)\le\Jae(H)$, $\cke(H)\le\cke(\co H)$ and taking into account  $\Jae(H)\le \cke(H)$ (see Proposition~\ref{int-ineq-known} below), if we had  $\Jae(\co H)<\Jae(H)$ then we would conclude that $\Jae(\co H)<\cke(\co H)$. The only example of a convex set $C$ satisfying $\Jae(C)<\cke(C)$ known to us is given in Example~\ref{exa-ghm} below and it seems that it cannot be easily improved.

As for the quantities $\dh(\wscl H,H)$, $\ck(H)$ and $\Ja(H)$ -- they are natural in case of a convex set $H$. If $H$ is not convex, they are not measures of weak non-compactness in the above sense since they may be strictly positive even if $H$ is relatively weakly compact. This is witnessed by Example~\ref{exa-nonconvex} below.

The following proposition sums up the easy inequalities.

\begin{prop}\label{int-ineq-known}
Let $E$ be a Banach space.
\begin{itemize}
	\item Let $H\subset E$ be a bounded set. Then the following inequalities hold true:
	$$\Jae(H)\le\cke(H)\le\dh(\wscl H,E)\le\gamma(H).$$
\item Let $C\subset E$ be a convex bounded set. Then the following inequalities hold true:
\begin{equation*}
\begin{array}{ccccccccc}
&&\ck_E(C)&\le&\dh(\wscl C,E)&&&& \\
&\neleq&&\seleq&&\seleq&&& \\
\Jae(C)&	\le&\Ja(C)&\le&\ck(C)&\le & \dh(\wscl C,C)&\le& \gamma(C). \end{array}
\end{equation*}
\end{itemize}
\end{prop}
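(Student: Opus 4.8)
The plan is to verify the chain inequalities one link at a time; all of them are elementary once one unwinds the definitions, the only genuine input being the biduality pairing and the description of $\dh$ via $\dh(A,B)=\inf\{\ep>0:A\subset B+\ep B_E\}$. I would organize the work into four groups.

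\emph{The quantities involving clusters of sequences.} First I would record the inequality $\dh(\wscl H,E)\le\gamma(H)$. Given $\ep>\gamma(H)$ and $x^{**}\in\wscl H$, pick a net in $H$ weak*-converging to $x^{**}$; by a standard argument (the one behind Pryce's lemma / the double-limit criterion) one extracts a sequence $(x_n)\subset H$ such that $\dist(x^{**},\clust_{E^{**}}((x_n)))=0$ and the double-limit oscillation of $(x_n)$ controls $\dist(x^{**},E)$ by $\gamma(H)$; hence $\dist(x^{**},E)\le\gamma(H)$, giving $\dh(\wscl H,E)\le\gamma(H)$. Next, $\cke(H)\le\dh(\wscl H,E)$ is immediate: for any sequence $(x_n)\subset H$ one has $\clust_{E^{**}}((x_n))\subset\wscl H$, so $\dist(\clust_{E^{**}}((x_n)),E)\le\dh(\wscl H,E)$, and taking the supremum over sequences yields the claim. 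The same containment gives $\ck(H)\le\dh(\wscl H,H)$ (distance measured to $H$ rather than to $E$), which is one of the links in the convex diagram.

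\emph{The link $\Jae(H)\le\cke(H)$ (and $\Ja(H)\le\ck(H)$).} Let $\ep>\cke(H)$ and fix $x^*\in E^*$. Choose $(x_n)\subset H$ with $x^*(x_n)\to\sup x^*(H)$; then every cluster point $x^{**}\in\clust_{E^{**}}((x_n))\subset\wscl H$ satisfies $x^{**}(x^*)=\sup x^*(H)$, and by definition of $\cke$ at least one such cluster point has $\dist(x^{**},E)\le\cke(H)<\ep$. Since $x^*$ was arbitrary, $\Jae(H)\le\ep$, hence $\Jae(H)\le\cke(H)$. Replacing $E$ by $H$ throughout gives $\Ja(H)\le\ck(H)$ for the convex diagram.

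\emph{The remaining links in the convex diagram.} For $C$ convex and bounded I would establish: $\Ja(C)\le\dh(\wscl C,C)$ directly from the definitions (for each $x^*$, any maximizer $x^{**}\in\wscl C$ of $x^*$ on $C$ has $\dist(x^{**},C)\le\dh(\wscl C,C)$, so the defining $\ep$ for $\Ja$ is at most $\dh(\wscl C,C)$), which also subsumes $\ck(C)\le\dh(\wscl C,C)$ from the previous step; the trivial monotonicity $\Jae(C)\le\Ja(C)$ and $\ck_E(C)\le\ck(C)$, $\dh(\wscl C,E)\le\dh(\wscl C,C)$ coming from $\dist(\cdot,E)\le\dist(\cdot,C)$; the diagonal $\Jae(C)\le\ck_E(C)$ and $\ck_E(C)\le\dh(\wscl C,E)$ from the first group applied with the distances taken to $E$; and finally $\dh(\wscl C,C)\le\gamma(C)$, which in the convex case follows from $\dh(\wscl C,C)=\dh(\wscl C,E)$ when $C$ is suitably normalized — more honestly, $\dh(\wscl C,C)\le\gamma(C)$ is the convex refinement of Pryce's estimate and can be quoted/argued exactly as the $\dh(\wscl H,E)\le\gamma(H)$ step, using that for convex $C$ the weak*-cluster set of a sequence extracted from $C$ has distance to $C$ controlled by the double-limit oscillation.

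\emph{Main obstacle.} The only step that is not a one-line unwinding is $\dh(\wscl H,E)\le\gamma(H)$ (and its convex sibling $\dh(\wscl C,C)\le\gamma(C)$): this is the classical Pryce-type interchange-of-limits argument, and one must be careful to extract, from an arbitrary weak*-convergent net in $H$, a \emph{sequence} whose iterated limits sandwich the value $x^{**}(x^*)$ while keeping control of the distance $\dist(x^{**},E)$. Everything else is a matter of chasing the definitions and the monotonicity of $t\mapsto\dist(\cdot,t)$ in the set argument; I expect the write-up of those to be short, with appropriate references to \cite{ang-cas1,cas-alt11,FHMZ,kr-pr-sc} for the $\gamma$-estimates.
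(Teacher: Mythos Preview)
Your treatment of the easy links is fine and matches the paper: $\cke(H)\le\dh(\wscl H,E)$ and $\ck(C)\le\dh(\wscl C,C)$ are immediate from $\clust_{E^{**}}((x_n))\subset\wscl H$, the diagonal monotonicities $\Jae\le\Ja$, $\cke\le\ck$, $\dh(\wscl C,E)\le\dh(\wscl C,C)$ come from $\dist(\cdot,E)\le\dist(\cdot,C)$, and your argument for $\Jae(H)\le\cke(H)$ (and the $\Ja\le\ck$ variant) is exactly the paper's argument read in the forward direction rather than by contraposition. For $\dh(\wscl H,E)\le\gamma(H)$ the paper simply cites \cite[Proposition~8(ii)]{FHMZ}, so your plan to quote the literature there is also what the authors do.

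The genuine gap is the step $\dh(\wscl C,C)\le\gamma(C)$. Both of your proposed routes fail. The first, ``$\dh(\wscl C,C)=\dh(\wscl C,E)$ when $C$ is suitably normalized'', is simply false: Examples~\ref{exa-c0omega} and \ref{exa-c00omega1} in the paper exhibit convex bounded sets with $\dh(\wscl C,E)=\tfrac12$ and $\dh(\wscl C,C)=1$, and no rescaling collapses that gap. The second, ``argue exactly as for $\dh(\wscl H,E)\le\gamma(H)$'', does not transfer: the FHMZ argument controls the distance of $x^{**}$ to the \emph{whole space} $E$, and there is no mechanism in that proof to replace $E$ by the smaller set $C$. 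Your phrase ``the weak*-cluster set of a sequence extracted from $C$ has distance to $C$ controlled by the double-limit oscillation'' is precisely the assertion to be proved, not an argument for it.

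The paper proves $\dh(\wscl C,C)\le\gamma(C)$ by a direct construction that genuinely uses the convexity of $C$: given $x^{**}\in\wscl C$ with $\dist(x^{**},C)>r$, one separates $x^{**}$ from $C$ by a norm-one functional $x^{***}\in E^{***}$ (Hahn--Banach in $E^{**}$, using that $C$ is convex), then alternately uses Goldstine to approximate $x^{***}$ by elements $x_n^*\in B_{E^*}$ and uses $x^{**}\in\wscl C$ to pick $x_n\in C$, producing sequences $(x_n)\subset C$, $(x_n^*)\subset B_{E^*}$ with $x_n^*(x_m)>s+r$ for $m\ge n$ and $x_n^*(x_m)<s$ for $m<n$. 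After passing to subsequences the iterated limits differ by at least $r$, giving $\gamma(C)\ge r$. This separation-plus-Goldstine-interlacing idea is the missing ingredient in your proposal.
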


\begin{proof}  Let us start by the first part. The inequality $\cke(H)\le\dh(\wscl H,E)$ is trivial. The inequality $\dh(\wscl{H},E)\le \gamma(H)$ is proved in \cite[Proposition 8(ii)]{FHMZ}, see also~\cite[Corollary 4.3]{cas-alt11}. Let us show that $\Jae(H)\le\cke(H)$.

Note first that if $\Jae(H)=0$ then inequality $0\leq \cke(H)$ trivially holds. Assume that $0< \Jae(H)$ and take an arbitrary $0<\epsilon<\Jae(H)$. By definition there is $x^*\in E^*$ such that for any $x^{**}\in \wscl H$ with $x^{**}(x^*)=\sup x^*(H)$ we have that $\epsilon<\d(x^{**},E)$. Fix a sequence $(x_n)$ in $H$ satisfying $\sup x^*(H)=\lim_n x^*(x_n)$.
Then each weak* cluster point $x^{**}$ of $(x_n)$ satisfies $x^{**}(x^*)=\sup x^*(H)$,  hence $\epsilon\leq  \d(\clust_{E^{**}}((x_n)),E)$ and therefore $\epsilon \leq \cke(H)$. This finishes the proof for $\Jae(H)\leq \cke(H)$.

Now let us proceed with the second part. All inequalities are obvious but $\Jae(C)\leq \ck_E(C)$, $\Ja(C)\leq \ck(C)$ and $\dh(\wscl{C},C)\le \gamma(C)$.
The first one follows from the first part. The second one can be proved in the same way.

Now we prove that $\dh(\wscl{C},C)\le \gamma(C)$. Suppose that $r>0$ is such that $\dh(\wscl C ,C)>r$. Fix $x^{**}\in \wscl C$ such that $\dist(x^{**},C)>r$. By the Hahn-Banach separation theorem there is $x^{***}\in X^{***}$ with $\|x^{***}\|=1$ and $s\in\er$ such that
    \begin{equation}\label{1}
        x^{***}(x^{**})>s+r>s>\sup_{x\in C} x^{***}(x).
    \end{equation}
We will construct by induction two sequences $(x_n)$ in $C$ and
$(x^*_n)$ in $B_{E^*}$ such that the following conditions are satisfied
for each $n\in\en$:
\begin{itemize}
    \item [(i)] $x^{**}(x^*_n)>s+r$,
    \item [(ii)] $x^*_n(x_m)  < s$ for $m<n$,
    \item [(iii)] $x^*_m(x_n) > s+r$ for $m\le n$.
\end{itemize}

By (\ref{1}) and the Goldstine theorem we can choose $x^*_1$ satisfying (i).
Now suppose that
$n\in\en$ is such that $x^*_m$ for $m\le n$ and $x_m$ for $m<n$
satisfy (i)--(iii).  Using that (i) holds for $x^*_1,\dots,x^*_n$ and that $x^{**}\in \wscl C$, we can choose
$x_n\in C$ satisfying (iii). Further, by (\ref{1}) and the Goldstine theorem
 we can find $x^*_{n+1}\in B_{E^*}$ satisfying (i) and (ii). This completes the
construction.

By passing to subsequences we may assume that  $\lim_n x^*_n(x_m)$ exists for all $m\in\en$ and that $\lim_m x^*_n(x_m)$ exists for all $n\in \en$ and (ii) and (iii) are satisfied. By taking further subsequences we may assume also that the limits $\lim_n\lim_mx^*_n(x_m)$ and $\lim_m\lim_nx^*_n(x_m)$ exist and that again and (ii) and (iii) are satisfied. By the construction we get
$$\lim_n\lim_mx^*_n(x_m)\ge s+r\mbox{\quad and\quad }\lim_m\lim_nx^*_n(x_m)\le s,$$
hence $\gamma(C)\ge r$. This completes the proof.
\end{proof}

We note that in the second part of the proposition above we only have to use the convexity of $C$ to prove the inequality $\dh(\wscl{C},C)\le \gamma(C)$; the rest of the inequalities  hold for an arbitrary bounded set. But for non-convex sets only the first part is interesting. This is witnessed by the following example which shows in particular the failure of the inequality $\dh(\wscl{C},C)\le \gamma(C)$ if $C$ is not convex.
	
\begin{example}\label{exa-nonconvex}
Let $E=c_0$ or $E=\ell_p$ for some $p\in(1,\infty)$. Let $H=\{e_n:n\in\en\}$,
where $e_n$ is the canonical $n$-th basic vector. Then $H$ is relatively weakly compact, hence $\dh(\wscl H,E)=\cke(H)=\Jae(H)=\gamma(H)=0$. However,
$\Ja(H)=\ck(H)=\dh(\wscl H,H)=1$.
\end{example}

\begin{proof} As the sequence $(e_n)$ weakly converges to $0$, $H$ is relatively weakly compact. This finishes the proof of the first part. Moreover,
$\wscl H$ is in fact the weak closure of $H$ in $E$ and is equal to $H\cup\{0\}$.
Thus clearly $\dh(\wscl H,H)=\ck(H)=1$. Finally, to show $\Ja(H)\ge 1$, consider $x^*\in E^*$ represented by the sequence $(-\frac1{2^n})_{n=1}^\infty$ in the respective sequence space. Then $\sup x^*(H)=0$ and the only point in $\wscl H$ at which the supremum is attained is $0$. The observation that $d(0,H)=1$ completes the proof.
\end{proof}

We remark that, for non-convex $H$, it is more natural to consider the quantity $\dh(\wscl H,\co H)$ instead of using $\dh(\wscl H,H)$  (cf. Section~\ref{S-krein}). Similar versions of other quantities can be studied as well.

\section{Quantitative versions of James' theorem}\label{S-pryce}

This section is devoted to the proof of the main results of this paper. In the course of the proof we use a proof of James' compactness theorem due to J.\,D.~Pryce in \cite{pryce}.

\begin{theorem}\label{prop-pryce} Let $E$ be a Banach space and $H\subset E$ a bounded subset. Then
$$\frac12\gamma(H)\leq \Jae(H).$$
\end{theorem}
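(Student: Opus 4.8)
The goal is to show $\frac12\gamma(H)\le\Jae(H)$, or equivalently: if $x^*$ is any functional witnessing (up to $\epsilon$) that $\Jae(H)<\epsilon$ fails for a given level, then one can extract from that failure a double-limit violation of size roughly $2\epsilon$. So fix $r$ with $0<r<\gamma(H)$; I want to produce $\epsilon$ with $\Jae(H)\ge\epsilon$ and $2\epsilon\ge r$, i.e.\ to show $\Jae(H)\ge\frac r2$. By definition of $\gamma(H)$ there are sequences $(x_n)\subset H$ and $(x_m^*)\subset B_{E^*}$ with both iterated limits $a=\lim_n\lim_m x_m^*(x_n)$ and $b=\lim_m\lim_n x_m^*(x_n)$ existing and $|a-b|>r$; say $a>b+r$ after possibly swapping the roles. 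Passing to subsequences I may also assume $\lim_m x_m^*(x_n)$ exists for each fixed $n$ and $\lim_n x_m^*(x_n)$ exists for each fixed $m$.

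The heart is Pryce's argument. I would set $c=\frac r2$ (or any $c<\frac r2$) and build a single functional $x^*\in E^*$ of the form $x^*=\sum_k \lambda_k x_{m_k}^*$, a convex combination along a carefully chosen subsequence $(m_k)$ of the $x_m^*$, together with a matching subsequence $(n_j)$ of the $(x_n)$, interleaved by the usual Pryce diagonalization so that $x^*$ "sees" the oscillation. The weights $\lambda_k>0$, $\sum\lambda_k=1$, are chosen decaying fast enough (e.g.\ $\lambda_k$ comparable to a geometric sequence) that the tails are negligible; this is exactly the mechanism in \cite{pryce}. The outcome of the construction should be: for the chosen $x^*$, one has $\sup x^*(H)\ge$ (something close to) $a$ coming from the $b$-side behaviour being flushed out, while for \emph{every} $x^{**}\in\wscl H$ at which $\sup x^*(H)$ is attained, evaluating $x^{**}$ against the "missing" functional data forces $\dist(x^{**},E)\ge c$. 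Concretely: any such $x^{**}$ must satisfy $x^{**}(x_{m_k}^*)$ bounded below near $a$ for all $k$ (because $x^*(x^{**})=\sup x^*(H)$ and each term is bounded above), yet any $x\in E$ has $\lim_k x_{m_k}^*(x)$ along the tail controlled, so $x^{**}-x$ must have norm at least roughly $a-b>r$ when tested against the $B_{E^*}$-functionals $x_{m_k}^*$ — giving $\dist(x^{**},E)\ge\frac{a-b}{2}>\frac r2\ge c$. Letting $c\uparrow\frac r2$ and $r\uparrow\gamma(H)$ yields $\Jae(H)\ge\frac12\gamma(H)$.

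The main obstacle is the bookkeeping in Pryce's interleaved induction: one must simultaneously choose indices $m_k$ and $n_k$ and real thresholds so that (a) along the diagonal the functional values stay near $a$, (b) the "cross" values (old $n$'s against new $m$'s, and new $n$'s against old $m$'s) separate by a fixed gap close to $r$, and (c) the convex-combination tails contribute less than a prescribed small error, so that the separation survives passing to the limit functional $x^*$ and to any weak$^*$ cluster / attaining point $x^{**}$. Getting the constant to be exactly $\frac12$ (rather than something worse) requires splitting the gap $a-b$ evenly between "how far $x^{**}$ sits above the values $x^*$ achieves on $H$ using $E$-points" and the triangle-inequality estimate for $\dist(x^{**},E)$; this is where the factor $2$ enters and must be tracked carefully. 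A secondary technical point is ensuring the attaining point $x^{**}$ exists and lies in $\wscl H$: since $x^*$ is fixed and $\wscl H$ is weak$^*$-compact (because $H$ is bounded), $\sup x^*(H)=\sup x^*(\wscl H)$ is attained on $\wscl H$, so the set of attaining points is a nonempty weak$^*$-compact face, and the lower bound on $\dist(\cdot,E)$ must be shown for every element of it, which the construction (being phrased in terms of all of $\wscl H$ rather than one cluster point) delivers.
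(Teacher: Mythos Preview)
Your overall strategy is the right one and is the same as the paper's: both follow Pryce's construction. But your description of the functional and of where the constant $\frac12$ comes from contains real gaps, not just vagueness.

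The functional is \emph{not} of the form $x^*=\sum_k\lambda_k x_{m_k}^*$ for a subsequence $(x_{m_k}^*)$. In Pryce's argument one chooses, recursively via a sup--inf lemma, elements $g_n\in K_n:=\conv\{f_i:i\ge n\}$ (convex combinations, not members of the original sequence), picks a weak$^*$ cluster point $g_0\in B_{E^*}$ of $(g_n)$, and sets $g=\sum_n\beta_n(g_n-g_0)$. The subtraction of $g_0$ is precisely the ingredient you omit. Your claim that ``any $x\in E$ has $\lim_k x_{m_k}^*(x)$ along the tail controlled'' is false in general: for an arbitrary $v\in E$ there is no reason $x_{m_k}^*(v)$ should be small or even converge. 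What \emph{is} true is the one-sided bound $g_0(v)\ge\liminf_n g_n(v)$ for $v\in E$, coming from $g_0$ being a weak$^*$ cluster point --- and this is exactly why the $-g_0$ term must be built into the functional from the start.

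Your account of the factor $2$ is also off. You assert that $\|x^{**}-x\|$ is at least roughly $a-b>r$ ``when tested against $x_{m_k}^*$'', and then conclude $\dist(x^{**},E)\ge\frac{a-b}{2}$; if the first estimate held you would get $\dist\ge a-b$, not half of it. In the actual proof the inductive inequalities force $\liminf_n(g_n-g_0)(u)\ge r'$ for every attaining point $u\in\wscl H$, and then for any $v\in E$ one combines this with $g_0(v)\ge\liminf_n g_n(v)$ to obtain
\[
r'\;\le\;\limsup_n g_n(u-v)+g_0(v-u)\;\le\;\|u-v\|+\|u-v\|\;=\;2\|u-v\|,
\]
since both $g_n$ and $g_0$ lie in $B_{E^*}$. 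So the $2$ arises from having to estimate \emph{two} unit-ball functionals against $u-v$, not from ``splitting the gap $a-b$ evenly.''
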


\begin{proof}
Assume that $\gamma(H)>r$ for some $r>0$.
We denote by:
\begin{itemize}
\item $F$ the space of all {norm} continuous positive homogenous real-valued functions on $E$, {{i.e.} continuous functions $f:E\to \reals$ satisfying $f(\alpha x)=\alpha f(x)$, $\alpha \geq 0$ and $x\in E$.}
\item $p(f)= \sup f(H)$, $f\in F$,
\item $P(f)=\sup |f|(H)$, $f\in F$.
\end{itemize}
Then $p$ is a sublinear functional  and $P$ is a {seminorm} on $F$.

Let $(f_i)\subset B_{E^*}$ and $(z_j)\subset H$ be {sequences} such that
\[	
\lim_i\lim_j f_i(z_j)-\lim_j\lim_i f_i(z_j)>r
\]
and all the limits involved exist. By omitting finitely many elements of $(f_i)$ we may assume that
    \begin{equation}\label{eq:ToBeUsedLemma15.3}
        \lim_j f_i(z_j)-\lim_j\lim_i f_i(z_j)>r,\quad i\in \en.
    \end{equation}
Hence for every $i\in \en$ there exists $j_0\in\en$ such that
\[
f_i(z_j)-\lim_i f_i(z_j)>r, \quad j\geq j_0.
\]

Let $X$ stand for the linear span of $\{f_i:i\in\en\}$.
As $X$ is separable in the seminorm $P$ and  the functionals $f_i$ are equicontinuous for the norm on $E$, it follows from \cite[Lemma~2]{pryce} that  we can suppose without loss of generality that
    \begin{equation}\label{pecko}
        p(f-\liminf_i f_i)=p(f-\limsup_i f_i)\quad \text{ for all }f\in X.
    \end{equation}

We denote
\[
K_n=\conv\{f_i: i\geq n\},\quad n\in\en,
\]
and thus {we} obtain
\[
F\supset E^*\supset X\supset K_1\supset K_2\supset \cdots .
\]

By the proof of \cite[Lemma~3]{pryce} {and bearing in mind the inequality~(\ref{eq:ToBeUsedLemma15.3})},  we obtain
\begin{equation}
\label{kjedna}
p(f-\liminf_i f_i)>r,\quad f\in K_1.
\end{equation}

Next we quote \cite[Lemma~4]{pryce}.
\begin{claim}\label{cl2}
Let $Y$ be a linear space, $\rho,\beta,\beta'$ be strictly positive numbers, $p$ be a sublinear functional on $Y$, $A\subset Y$ be a convex set and $u\in Y$ satisfy
\[
\inf_{a\in A} p(u+\beta a)>\beta \rho +p(u).
\]
Then there exists $a_0\in A$ such that
\[
\inf_{a\in A} p(u+\beta a_0+\beta' a)>\beta' \rho + p(u+\beta a_0).
\]
\end{claim}

This claim will be used to prove the following one which is a mild strengthening of \cite[Lemma 5]{pryce}. Let us fix $r'\in(0,r)$ arbitrary.

\begin{claim}
\label{cl3}
Let $(\beta_n)$ be a sequence of strictly positive numbers. Then there exists a sequence $(g_n)$ in $F$ such that  $g_n\in K_n$ for $n\in\en$ and
\begin{equation}
\label{nerp}
p\left(\sum_{i=1}^n \beta_i(g_i-\liminf_j f_j)\right)>\beta_n r'+p\left(\sum_{i=1}^{n-1} \beta_i(g_i-\liminf_j f_j)\right), \quad n\in\en.
\end{equation}
\end{claim}

\begin{proof}
The construction proceeds by induction. Let $f_0=\liminf_j f_j$.

 If $n=1$, we use Claim~\ref{cl2} for $u=0$, $\beta=\beta_1$, $\beta'=\beta_2$, $\rho=r'$, and $A=K_1-f_0$.
By \eqref{kjedna},
\[
\aligned
\inf_{g\in A} p(u+\beta g)&=\inf_{g\in A} \beta p(g)=\beta_1 \inf_{f\in K_1} p(f-\liminf_j f_j)\\
&>\beta_1 r'=\beta_1 r'+p(u),
\endaligned
\]
and hence Claim~\ref{cl2} gives the existence of $g_1\in K_1$  satisfying
\[
\inf_{f\in K_1} p(\beta_1(g_1-f_0)+\beta_2 (f-f_0))>\beta_2 r'+
p(\beta_1(g_1-f_0)).
\]
This finishes the first step of the construction.

Assume now that we have found $g_i\in K_i$, $i=1,\dots, n-1$, for some $n\in\en$, $n\ge2$, such that
\[
\inf_{f\in K_{n-1}-f_0} p\left(\sum_{i=1}^{n-1} \beta_i(g_i-f_0)+\beta_n f\right)>\beta_n r'+p\left(\sum_{i=1}^{n-1} \beta_i(g_i-f_0)\right).
\]
We use Claim~\ref{cl2} with $u=\sum_{i=1}^{n-1}\beta_i(g_i-f_0)$, $\beta=\beta_n$, $\beta'=\beta_{n+1}$, $\rho=r'$, and $A=K_n-f_0$. Since $K_n\subset K_{n-1}$, inductive hypothesis gives
\[
\inf_{f\in A} p(u+\beta f)\geq \inf_{f\in K_{n-1}-f_0} p(u+\beta f)>\beta_n r'+p(u).
\]
By Claim~\ref{cl2}, there exists $g_n\in K_n$ such that
\[
\inf_{f\in A} p\left(\sum_{i=1}^n \beta_i(g_i-f_0)+\beta_{n+1} f\right)>\beta_{n+1} r'+p\left(\sum_{i=1}^{n-1} \beta_i(g_i-f_0)+\beta_n (g_n-f_0)\right).
\]
This completes the inductive construction.

We have obtained elements $g_n\in K_n$, $n\in\en$, such that
\[
\inf_{g\in K_n} p\left(\sum_{i=1}^n \beta_i(g_i-f_0)+\beta_{n+1}(g-f_0)\right)>\beta_{n+1}r'+p\left(\sum_{i=1}^{n} \beta_i(g_i-f_0)\right).
\]
Since $g_{n+1}\in K_{n+1}\subset K_n$, this yields
\[
p\left(\sum_{i=1}^n \beta_i(g_i-f_0)+\beta_{n+1}(g_{n+1}-f_0)\right)>\beta_{n+1}r'+p\left(\sum_{i=1}^{n} \beta_i(g_i-f_0)\right).
\]
This finishes the proof.
\end{proof}

Let $\beta_i>0$, $i\in\en$, be chosen in such a way that $\lim_{n} \frac{1}{\beta_{n}}\sum_{i=n+1}^\infty \beta_i=0$.
Let $(g_n)$ be a sequence provided by Claim~\ref{cl3}. Since {for every $n\in\en$ we have that $g_n\in K_n\subset B_{E^*}$}, we can select a weak$^*$-cluster point $g_0\in B_{E^*}$  of $(g_n)$. By \cite[Lemma~6]{pryce}, we have the following observation.

\begin{claim}
\label{cl4}
For any $f\in X$, $p(f-g_0)=p(f-\liminf_n f_n)$.
\end{claim}

By Claim~\ref{cl4}, we can replace $\liminf_j f_j$ by $g_0$ in \eqref{nerp} and get the following inequalities
\begin{equation}
\label{nerg}
p\left(\sum_{i=1}^n \beta_i(g_i-g_0)\right)>\beta_n r'+p\left(\sum_{i=1}^{n-1} \beta_i(g_i-g_0)\right), \quad n\in\en.
\end{equation}

 We set $M=\sup\{\|x\|: x\in H\}$ and remark that $\|g_i-g_0\|\leq 2$, $i\in\en$.

We set $g=\sum_{i=1}^\infty \beta_i(g_i-g_0)$. Let $u\in \wscl{H}$ be an arbitrary point satisfying $g(u)=\sup g(H)$.
Then, for any $n\in\en$, we get from \eqref{nerg}
\[
\aligned
\sum_{i=1}^n\beta_i(g_i-g_0)(u)&=g(u)-\sum_{i=n+1}^\infty \beta_i(g_i-g_0){(u)}
\geq p(g)- 2M\sum_{i=n+1}^\infty\beta_i \\
&\geq p\left(\sum_{i=1}^n \beta_i(g_i-g_0)\right)-p\left(\sum_{i=1}^n \beta_i(g_i-g_0)-g\right)-2M\sum_{i=n+1}^\infty \beta_i\\
&\geq p\left(\sum_{i=1}^n \beta_i(g_i-g_0)\right)-4M \sum_{i=n+1}^\infty \beta_i\\
&>\beta_n r'+p\left(\sum_{i=1}^{n-1} \beta_i(g_i-g_0)\right)-4M \sum_{i=n+1}^\infty \beta_i\\
&\geq \beta_n r'+\sum_{i=1}^{n-1} \beta_i(g_i-g_0)(u)-4M \sum_{i=n+1}^\infty \beta_i.
\endaligned
\]
Hence
\[
(g_n-g_0)(u)\geq r'-4M \frac{1}{\beta_{n}}\sum_{i=n+1}^\infty \beta_i,\quad n\in\en,
\]
which gives
\begin{equation}
\label{limi}
\liminf_n (g_n-g_0)(u)\geq r'.
\end{equation}

{Let $v\in E$ be arbitrary.} Then $g_0(v)\geq \liminf_n g_n(v)$, which along with \eqref{limi} gives
\[
\aligned
r'&\leq \liminf_n g_n(u)-\liminf_n g_n(v)+g_0(v-u)\\
&\leq -\liminf_n (g_n(v)-g_n(u))+g_0(v-u)\\
&\leq 2\|v-u\|.
\endaligned
\]
By the definition of $\Jae(H)$ it follows $\Jae(H)\geq \frac12 r'$. Since $r$ satisfying $\gamma(H)>r$ and $r'\in(0,r)$ are arbitrary we conclude that  $\Jae(H)\geq \frac12 \gamma(H)$.
\end{proof}

As a consequence of Theorem~\ref{prop-pryce} we obtain that all measures of non-compactness that we have considered in this paper are equivalent. In other words, {\em all classical approaches used to study weak compactness in Banach spaces (Tychonoff's theorem, Eberlein's theorem, Grothendieck's theorem and James' theorem) are qualitatively and quantitatively equivalent.}

\begin{cor}\label{cor:AllPossibleInequalities} Let $E$ be a Banach space.
\begin{itemize}
	\item Let $H\subset E$ be a bounded set. Then the following inequalities hold true:
	$$\frac12\gamma(H)\le\Jae(H)\le\cke(H)\le\dh(\wscl H,E)\le\gamma(H).$$

 \item Let $C\subset E$ be a bounded convex set. Then the following inequalities hold true:
\begin{equation*}
\begin{array}{ccccccccccc}
&&&&\ck_E(C)&\le&\dh(\wscl C,E)&&&& \\
&&&\neleq&&\seleq&&\seleq&&& \\
\frac12\gamma(C)&\le&\Jae(C)&	\le&\Ja(C)&\le&\ck(C)&\le & \dh(\wscl C,C)&\le& \gamma(C). \end{array}
\end{equation*}
\end{itemize}
\end{cor}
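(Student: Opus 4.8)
The plan is to derive Corollary~\ref{cor:AllPossibleInequalities} by simply assembling the pieces already established in the excerpt. The first bulleted chain is immediate: Proposition~\ref{int-ineq-known} (first part) gives the three rightmost inequalities $\Jae(H)\le\cke(H)\le\dh(\wscl H,E)\le\gamma(H)$ for an arbitrary bounded set $H$, and Theorem~\ref{prop-pryce} supplies exactly the missing leftmost inequality $\tfrac12\gamma(H)\le\Jae(H)$. Concatenating these four inequalities yields the displayed chain, and there is nothing further to prove.

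For the second bullet, concerning a bounded convex set $C$, I would observe that the entire displayed diagram except for the leftmost entry $\tfrac12\gamma(C)$ is precisely the diagram in the second part of Proposition~\ref{int-ineq-known}, already proved there (note that convexity was used only for $\dh(\wscl C,C)\le\gamma(C)$; the remaining arrows hold for arbitrary bounded sets). It therefore remains only to justify the single new arrow $\tfrac12\gamma(C)\le\Jae(C)$. But this is again just Theorem~\ref{prop-pryce} applied to the bounded set $H=C$: that theorem was stated for an arbitrary bounded subset, so convexity plays no role here. Splicing this one inequality onto the left end of the Proposition~\ref{int-ineq-known} diagram produces the claimed diagram.

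Concretely, I would write the proof as: \emph{The first chain follows by combining Theorem~\ref{prop-pryce} with the first part of Proposition~\ref{int-ineq-known}. For the second diagram, all the inequalities except $\tfrac12\gamma(C)\le\Jae(C)$ are contained in the second part of Proposition~\ref{int-ineq-known}, and the remaining one is Theorem~\ref{prop-pryce} applied to $H=C$.} There is essentially no obstacle: the corollary is a bookkeeping consequence, and all the mathematical content resides in Theorem~\ref{prop-pryce} (the new direction) and in Proposition~\ref{int-ineq-known} (the elementary inequalities and the convex diagram), both of which may be invoked directly. The only point requiring a moment's care is to confirm that Theorem~\ref{prop-pryce} is genuinely stated for arbitrary bounded sets, so that it legitimately furnishes the arrow $\tfrac12\gamma(C)\le\Jae(C)$ in the convex diagram without any additional argument; inspecting its statement confirms this.
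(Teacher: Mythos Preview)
Your proposal is correct and matches the paper's own proof, which simply states that the result follows from Proposition~\ref{int-ineq-known} and Theorem~\ref{prop-pryce}. You have merely spelled out explicitly which part of each cited result contributes which inequality, but the approach is identical.
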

\begin{proof} This result follows from Proposition~\ref{int-ineq-known} and Theorem \ref{prop-pryce}.
\end{proof}

The fact that the measures of weak non-compactness  $H\mapsto \dh(\wscl H,E)$, $\gamma$ and $\cke$ are equivalent can be found in~\cite{FHMZ} and~\cite{ang-cas1} with very different approaches.

In Section~\ref{examples} we offer several examples  showing that in the corollary above
any of the inequalities may become equality and that most of them may become strict.

\begin{cor} Let $E$ be a Banach space and $C\subset E$ be a closed convex bounded subset. Then $C$ is weakly compact provided $\Jae(C)=0$  (\emph{i.e.}, if for every $\epsilon >0$ and every $x^*\in X^*$ there is $x^{**}\in \overline{C}^{w^*}$ such that $x^{**}(x^*)=\sup x^*(C)$ and $d(x^{**},E)\leq \epsilon$).
\end{cor}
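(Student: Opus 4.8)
The plan is to read the statement off Corollary~\ref{cor:AllPossibleInequalities}, so that all the real work has already been done in Theorem~\ref{prop-pryce}. First I would unwind the hypothesis: the condition in parentheses is merely the definition of $\Jae(C)=0$, so there is nothing to prove there. Assume then $\Jae(C)=0$. By the chain of inequalities in the first item of Corollary~\ref{cor:AllPossibleInequalities} (which is valid for an arbitrary bounded set, convexity not yet needed),
\[
\tfrac12\gamma(C)\le\Jae(C)=0\le\cke(C)\le\dh(\wscl C,E)\le\gamma(C),
\]
whence $\gamma(C)=0$ and, in particular, $\dh(\wscl C,E)=0$.

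Next I would translate $\dh(\wscl C,E)=0$ geometrically. Since $\dh(A,B)=0$ exactly when $A\subset\overline B$, and since $E$ is norm-closed (hence $w^*$-sequentially... more simply, norm-closed and convex, hence $w^*$-closed in no stronger sense is needed here — we only need that $E$ is its own norm closure inside $E^{**}$), we get $\wscl C\subset E$. Now $\wscl C$ is weak$^*$-compact in $E^{**}$; on subsets of $E$ the relative weak$^*$ topology of $E^{**}$ coincides with the weak topology $\sigma(E,E^*)$, so $\wscl C$ is a weakly compact subset of $E$. In particular $\wscl C$ is weakly closed, hence $\wscl C=\overline C^{\,w}$. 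Finally, $C$ being closed and convex is weakly closed (Mazur), so $\overline C^{\,w}=C$. Therefore $C=\wscl C$ is weakly compact, which is the assertion.

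The main point worth highlighting is that there is \emph{no} genuine obstacle at this stage: the proof is two lines once Corollary~\ref{cor:AllPossibleInequalities} (equivalently Theorem~\ref{prop-pryce}) is available, and all the difficulty has been absorbed into the Pryce-style argument proving $\tfrac12\gamma(H)\le\Jae(H)$. If one instead wanted a route that does not invoke Theorem~\ref{prop-pryce}, the natural substitute would be to use the classical James compactness theorem directly: $\Jae(C)=0$ means that for every $x^*\in E^*$ and every $\epsilon>0$ there is $x^{**}\in\wscl C$ with $x^{**}(x^*)=\sup x^*(C)$ and $d(x^{**},E)\le\epsilon$; combined with the known equivalence $\Jae(C)=0\iff\cke(C)=0\iff C$ relatively weakly compact from \cite{ang-cas1} (proved there via $\cke$ and Grothendieck-type arguments), and then weak closedness of the closed convex set $C$, one again concludes weak compactness. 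But given the results already established in the excerpt, the first argument is the intended one, and I would present only that.
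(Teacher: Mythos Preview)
Your argument is correct and is precisely the intended one: the paper states this corollary without proof, treating it as an immediate consequence of Corollary~\ref{cor:AllPossibleInequalities}, and your deduction $\Jae(C)=0\Rightarrow\dh(\wscl C,E)=0\Rightarrow\wscl C\subset E\Rightarrow C$ weakly compact (using Mazur for the last step) is exactly how one unwinds that. The only cosmetic point is that once you know $\wscl C\subset E$ is weakly compact and $C$ is weakly closed with $C\subset\wscl C$, you are done; the detour through $\wscl C=\overline{C}^{\,w}$ is correct but unnecessary.
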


\section{Relationship to the quantitative version of Krein's theorem}\label{S-krein}

Let $E$ be a Banach space and $C\subset E$ be a bounded convex set.
Then $\ext \wscl C$, the set of extreme points of $\wscl C$, is a boundary for $\wscl C$. Therefore the following inequalities are obvious:

\begin{equation}\label{eq-extreme}
\begin{gathered}
\dh(\wscl{\ext \wscl C},C)\ge \dh(\ext\wscl C,C)\ge\Ja(C),\\
\dh(\wscl{\ext \wscl C},E)\ge \dh(\ext\wscl C,E)\ge\Jae(C).
\end{gathered}
\end{equation}
These inequalities enable us to prove the following statement.

\begin{cor}\label{cor-Krein}
Let $E$ be a Banach space and $H\subset E$ be a  bounded set. Then the following inequalities hold:
\begin{itemize}
	\item[(i)] $\dh (\wscl{\co H},E)\le 2 \dh(\wscl H,E)$,
	\item[(ii)] $\dh (\wscl{\co H},{\co H})\le 2 \dh(\wscl H,{\co H})$.
\end{itemize}
\end{cor}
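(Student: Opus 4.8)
The plan is to deploy the machinery already assembled, applied to the \emph{convex} bounded set $C:=\co H$, and then to push the bounds back onto $H$ using that $\gamma$ is invariant under taking convex hulls (Proposition~\ref{prop-nonconvex}(iii)). The substantive content — the factor $\tfrac12$ between $\gamma$ and $\Jae$ — is already supplied by Theorem~\ref{prop-pryce}, so both (i) and (ii) should come out as short chains of inequalities, with essentially nothing to overcome beyond bookkeeping.

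For (i), I would first note that, $\co H$ being a bounded set, the first bullet of Corollary~\ref{cor:AllPossibleInequalities} gives $\dh(\wscl{\co H},E)\le\gamma(\co H)$, while Proposition~\ref{prop-nonconvex}(iii) gives $\gamma(\co H)=\gamma(H)$, and the same corollary applied to $H$ gives $\gamma(H)\le 2\,\dh(\wscl H,E)$ (this is precisely $\tfrac12\gamma(H)\le\Jae(H)\le\cke(H)\le\dh(\wscl H,E)$). Concatenating,
\[
\dh(\wscl{\co H},E)\ \le\ \gamma(\co H)\ =\ \gamma(H)\ \le\ 2\,\dh(\wscl H,E),
\]
which is (i).

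For (ii) the only additional observation needed is the trivial inclusion $\co H\subset E$, whence $d(x^{**},E)\le d(x^{**},\co H)$ for every $x^{**}\in E^{**}$, and therefore $\dh(\wscl H,E)\le\dh(\wscl H,\co H)$. Applying the second (convex) bullet of Corollary~\ref{cor:AllPossibleInequalities} to $C=\co H$ gives $\dh(\wscl{\co H},\co H)\le\gamma(\co H)$, and then as above
\[
\dh(\wscl{\co H},\co H)\ \le\ \gamma(\co H)\ =\ \gamma(H)\ \le\ 2\,\dh(\wscl H,E)\ \le\ 2\,\dh(\wscl H,\co H),
\]
which is (ii).

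There is really no obstacle here: the corollary is a repackaging of Theorem~\ref{prop-pryce}, and the main risk is only in keeping straight which quantities are intrinsic to $H$ and which reference the ambient $E$. Alternatively, and more in the spirit of the inequalities~\eqref{eq-extreme} that motivate the statement, one can route the argument through the extreme-point boundary: since the weak$^*$-closure of a convex set is convex, $\wscl{\co H}$ is the weak$^*$-closed convex hull of the weak$^*$-compact set $\wscl H$, so Milman's theorem yields $\ext\wscl{\co H}\subset\wscl H$; combining $\dh(\ext\wscl{\co H},E)\le\dh(\wscl H,E)$ and $\dh(\ext\wscl{\co H},\co H)\le\dh(\wscl H,\co H)$ with~\eqref{eq-extreme}, with $\dh(\wscl{\co H},\,\cdot\,)\le\gamma(\co H)$, and with $\gamma(\co H)\le 2\Jae(\co H)\le 2\Ja(\co H)$ gives the same two estimates. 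In either presentation it would be natural to point out that the constant $2$ is exactly the one from Theorem~\ref{prop-pryce}, and that whether it is optimal in Corollary~\ref{cor-Krein} is a question addressed by the examples in Section~\ref{examples}.
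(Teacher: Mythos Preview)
Your proof is correct. Your \emph{primary} route is genuinely different from the paper's, while your ``alternative'' sketch is essentially the paper's own argument.

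The paper sets $C=\overline{\co H}$, observes (via Milman's theorem) that $\ext\wscl C\subset\wscl H$, and then feeds this into the boundary inequalities~\eqref{eq-extreme} to get $\Jae(C)\le\dh(\wscl H,E)$ and $\Ja(C)\le\dh(\wscl H,\co H)$; the factor $2$ then enters through $\dh(\wscl C,\,\cdot\,)\le\gamma(C)\le 2\Jae(C)\le 2\Ja(C)$ from Corollary~\ref{cor:AllPossibleInequalities}. Your main argument bypasses extreme points entirely: you use the convex-hull invariance $\gamma(\co H)=\gamma(H)$ (Proposition~\ref{prop-nonconvex}(iii)) to transfer the $\gamma$-bound directly from $\co H$ back to $H$, and then close with $\gamma(H)\le 2\,\dh(\wscl H,E)$. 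This is shorter and avoids invoking Milman, at the cost of relying on the nontrivial identity $\gamma(\co H)=\gamma(H)$ (itself a form of quantitative Krein, proved in \cite{FHMZ,cas-alt11}). The paper's route, by contrast, makes the role of the boundary $\ext\wscl C$ explicit and ties the corollary visibly to the James-type quantities $\Jae$ and $\Ja$, which is the narrative point of the section. Both approaches ultimately draw the constant $2$ from Theorem~\ref{prop-pryce}.
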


\begin{proof} Set $C=\overline{\co H}$. Then $\ext \wscl C\subset \wscl H$,
so the inequalities follow from (\ref{eq-extreme}) and Corollary~\ref{cor:AllPossibleInequalities}.
\end{proof}

We remark that the assertion (i) was proved in \cite{FHMZ} and independently in \cite{sua} and \cite{cas-alt11}. In \cite{sua} and \cite{sua-alt} some examples are given which show that the inequality is optimal, i.e. the equality can take place if the quantities are non-zero.
However, these examples do not work for the assertion (ii). Hence, the following problem seems to be natural.

\begin{question} Let $E$ be a Banach space and $H\subset E$ a bounded set. Is it true that
$$\dh (\wscl{\co H},{\co H}) = \dh(\wscl H,{\co H}) \quad ?$$
\end{question}

The assertion (i) of Corollary~\ref{cor-Krein} is called in \cite{FHMZ} a quantitative version of Krein's theorem. Krein's theorem asserts that a closed convex hull of a weakly compact set is again weakly compact. This is the case when the quantities are $0$. In view of this also the assertion (ii) may be called a quantitative version of Krein's theorem. An interesting phenomenon is that there are examples showing that the inequality (i) is sharp but we do not know whether the inequality (ii) is sharp. Both examples showing sharpness of (i) are of similar nature: A set $H$ is constructed in a space $E_0$ such that
$\dh (\wscl{\co H},\co H) = \dh(\wscl H,{\co H})=1$.
Then the space $E_0$ is enlarged in a clever way to $E$ such that $\dh(\wscl{\co H},E)$ equals $1$ but $\dh(\wscl H,E)$ decreases to $\frac12$. If the space $E$ is enlarged even more, also the quantity $\dh(\wscl{\co H},E)$ will decrease to $\frac12$ and it will be no more a counterexample. Hence, a possible counterexample showing sharpness of (ii) should be of a quite different nature.

Moreover, one can show (although it is not obvious) that the answer to the above question is positive if $H$ is norm-separable. This is another indication of a great difference between (i) and (ii) as the example from \cite{sua-alt} is norm-separable (see Example~\ref{exa-ghm} below).

\section{Examples}\label{examples}

In this section we collect examples showing the sharpness of some of the inequalities that are collected in Corollary~\ref{cor:AllPossibleInequalities}.
We remark that unless all the quantities are zero, at least one of the inequalities must be strict. We stress again that the examples in this section show in particular that
any of the inequalities may become equality and that most of them may become strict.

\begin{example}\label{exa-c0} Let $E=c_0$ and $C=B_E$. Then $\gamma(C)=1$ and $\Jae(C)=1$.
Hence all other quantities are also equal to $1$.
\end{example}

\begin{proof} The equality $\gamma(C)=1$ follows from \cite[Example~2.7 and Theorem~2.8]{kr-pr-sc}. To show that $\Jae(C)\ge1$ take $x^*\in E^*$ represented by the sequence $(\frac1{2^n})_{n=1}^\infty$ in $\ell_1$. The only element of $\wscl C=B_{\ell_\infty}$ at which $x^*$ attains its supremum on $C$ is the constant sequence $(1)_{n=1}^\infty$ whose distance from $E$ is clearly $1$.
The rest now follows from Corollary~\ref{cor:AllPossibleInequalities}.
\end{proof}

\begin{example}\label{exa-ell1} Let $E=\ell_1$ and $C=B_E$. Then $\gamma(C)=2$ and $\dh(\wscl C,C)=1$. Hence all other quantities are equal to $1$.
\end{example}

\begin{proof} It is clear that $\dh(\wscl C,C)\le 1$.
Further, the inequality $\gamma(C)\ge 2$ is witnessed by sequences $(x_n)$ and $(x^*_n)$, where $x_n$ is the $n$-th canonical basic vector of $\ell_1$ and $x^*_n\in B_{\ell_\infty}$ is defined by
$$x^*_n(m)=\begin{cases} 1 & m\le n,\\ -1 & m>n.\end{cases}$$
The rest follows from Corollary~\ref{cor:AllPossibleInequalities}.
\end{proof}

\begin{example}\label{exa-c0omega} Let $E=C([0,\omega])$ and $C=\{x\in E: 0\le x\le 1\ \&\ x(\omega)=0\}$. Then $\dh(\wscl C,E)=\frac12$ and $\Ja(C)=1$.
Hence $\Ja_E(C)=\ck_E(C)=\frac12$ and $\ck(C)=\dh(\wscl C,C)=\gamma(C)=1$.
\end{example}

\begin{proof} Note that $E^*$ is canonically identified with $\ell_1([0,\omega])$ and $E^{**}$ with $\ell_\infty([0,\omega])$.

To show that $\dh(\wscl C,E)\le\frac12$ we observe that the constant function $\frac12$ belongs to $E$ and that $C\subset \frac12+\frac12 B_E$. Thus $\wscl C\subset \frac12+\frac12 B_{E^{**}}$.

Further, consider the element $x^*\in E^*=\ell_1([0,\omega])$ given by
$x^*(n)=\frac1{2^n}$ for $n<\omega$ and $x^*(\omega)=0$. Then the only element of $\wscl C$ at which $x^*$ attains its supremum on $C$ is $\chi_{[0,\omega)}$.
Its distance to $C$ of this element is clearly equal to $1$. Thus $\Ja(C)\ge 1$.

The rest follows from Corollary~\ref{cor:AllPossibleInequalities}.
\end{proof}

\begin{example}\label{exa-c00omega1} Let $E=C_0([0,\omega_1))$ and $C=\{x\in E: 0\le x\le 1\}$. Then $\dh(\wscl C,E)=1$ and $\ck(C)=\frac12$.
Hence $\Ja_E(C)=\Ja(C)=\ck_E(C)=\frac12$ and $\dh(\wscl C,C)=\gamma(C)=1$.
\end{example}

\begin{proof}
First note that the dual $E^*$ can be identified with $\ell_1([0,\omega_1))$ and the second dual $E^{**}$ with $\ell_\infty([0,\omega_1))$.

To show that $\dh(\wscl C,E)\ge1$ we note that the constant function $1$ belongs to $\wscl C$ and its distance to $E$ is $1$.

Next we will show that $\ck(C)\le\frac12$. Let $(x_n)$ be any sequence in $C$.
There is some $\alpha<\omega_1$ such that $x_n|_{(\alpha,\omega_1)}=0$ for each $n\in\en$. As the interval $[0,\alpha]$ is countable, there is a subsequence $(x_{n_k})$ which converges pointwise on $[0,\omega_1)$. The limit is an element of $\ell_\infty([0,\omega_1))=E^{**}$. Denote the limit by $x^{**}$. Then the sequence $(x_{n_k})$ weak* converges to $x^{**}$. Thus in particular $x^{**}\in \clust_{E^{**}}((x_n))$. Set $x=\frac12\chi_{[0,\alpha]}$. Then $x\in C$ and $\|x^{**}-x\|\le\frac12$ (as $0\le x^{**}\le 1$ and $x^{**}|_{(\alpha,\omega_1)}=0$). The inequality $\ck(C)\le\frac12$ now follows.

The rest follows from Corollary~\ref{cor:AllPossibleInequalities}.
\end{proof}

\begin{example} Let $E=C([0,\omega_1])$ and $C=\{x\in E: 0\le x\le 1\ \&\ x(\omega_1)=0\}$. Then $\dh(\wscl C,E)=\ck(C)=\frac12$ and $\dh(\wscl C,C)=1$.
Hence $\Ja_E(C)=\Ja(C)=\ck_E(C)=\frac12$ and $\gamma(C)=1$.
\end{example}

\begin{proof} We start similarly as in Example~\ref{exa-c0omega}:
Note that $E^*$ is canonically identified with $\ell_1([0,\omega_1])$ and $E^{**}$ with $\ell_\infty([0,\omega_1])$.

To show that $\dh(\wscl C,E)\le\frac12$ notice that the constant function $\frac12$ belongs to $E$ and that $C\subset \frac12+\frac12 B_E$. Thus $\wscl C\subset \frac12+\frac12 B_{E^{**}}$.

The inequality $\ck(C)\le \frac12$ can be proved in the same way as in Example~\ref{exa-c00omega1}. In fact, it follows from that example, since
$C_0([0,\omega_1))$ is isometric to $\{x\in E: x(\omega_1)=0\}$, and hence our set $C$ coincides with the set $C$ from Example~\ref{exa-c00omega1}.

Finally, $\dh(\wscl C,C)\ge 1$ as $\chi_{[0,\omega_1)}\in\wscl C$ and its distance from $C$ is equal to $1$.

The rest follows from Corollary~\ref{cor:AllPossibleInequalities}.
\end{proof}

\begin{example}\label{exa-ghm}
There is a Banach space $E$ and a closed convex bounded subset $C\subset E$ such that $\Jae(C)=\frac12$ and $\ck_E(C)=\Ja(C)=1$. Hence
$\ck(C)=\dh(\wscl C,E)=\dh(\wscl C,C)=\gamma(C)=1$.
\end{example}

\begin{proof} We use the example from \cite{sua-alt}. It is constructed there a set $K_0\subset[0,1]^{\en}$ and a free ultrafilter $u$ over $\en$ such that
(in particular) the following assertions are satisfied:

\begin{itemize}
	\item[(a)] $K_0$ consists of finitely supported vectors and is closed in the topology of uniform convergence on $\en$ but not in the pointwise convergence topology.
		\item[(b)] For each $x\in \overline{K_0}$ (the closure taken in the pointwise convergence topology) we have $\lim_u x(n) = 0$.
	\item[(c)] For each $x\in \overline{K_0}\setminus K_0$ there are infinitely many $n\in\en$ such that $x(n)=1$.
\end{itemize}

Let $E=\{x\in C(\beta\en) : x(u)=0\}$. We remark that $\beta\en$ is canonically identified with the space of ultrafilters over $\en$ and hence we have $u\in\beta\en$. Let us consider embedding $\kappa:\overline{K_0}\to E$ defined
by
$$\kappa(x)(p)=\lim_p x(n), \qquad p\in\beta\en,\, x\in\overline{K_0}.$$
By (b) it is a well defined mapping with values in $E$. Let $B=\kappa(K_0)$. Then $B$ is a bounded norm-closed subset of $E$. Set $C=\overline{\conv B}$.

It is proved in \cite{sua-alt} that $\dh(\wscl B,E)\le\frac12$. As $\wscl B$ contains extreme points of $\wscl C$, by (\ref{eq-extreme}) we get $\Jae(C)\le\frac12$.

In \cite{sua-alt} it is proved that $\dh(\wscl C,E)\ge1$.
We will show that even $\ck_E(C)\ge 1$. To do this it is enough to observe that $C\subset \{x\in E: x|_{\beta\en\setminus\en}=0\}$ (this follows from (a)). The latter space is isometric to $c_0$. As $c_0^*$ is separable, each element of $\wscl C$ is a weak* limit of a sequence from $C$. It follows that $\ck_E(C)=\dh(\wscl C,E)\ge1$.

By Corollary~\ref{cor:AllPossibleInequalities} it remains to prove that $\Ja(C)\ge 1$.
 To do that let us first recall that the dual to $E$ can be canonically identified with the space of all signed Radon measures on $\beta\en\setminus\{u\}$. This space can be decomposed as
$$E^*=\ell_1\oplus_1 M(\beta\en\setminus(\en\cup\{u\})).$$
The second dual is then represented as
$$E^{**}=\ell_\infty\oplus_\infty M(\beta\en\setminus(\en\cup\{u\}))^*.$$
Denote by $j$ the canonical embedding of $E$ into $E^{**}$ and by $\rho$ the embedding $\rho:\ell_\infty\to E^{**}$ given by $\rho(x)=(x,0)$ using the above representation. Now,
$$\rho(\ell_\infty)=\{x^{**}\in E^{**}: x^{**}(\mu)=0\mbox{ whenever $\mu\in M(\beta\en\setminus \{u\})$ is such that }\mu|_{\en}=0\}.$$
So, $\rho(\ell_\infty)$ is weak* closed and, moreover, $\rho$ is weak* to weak* homeomorphism ($\ell_\infty$ being considered as the dual to $\ell_1$).

Finally, $\rho|_{K_0}=(j\circ\kappa)|_{K_0}$ and hence $\wscl B=\rho\left(\overline{K_0}\right)$. Fix some $x\in\overline{K_0}\setminus K_0$ and let $A\subset\en$ be infinite such that $x|_A=1$. Such a set $A$ exists due to (c). Enumerate $A=\{a_n:n\in\en\}$ and define an element $u\in\ell_1$ by
$$u(k)=\begin{cases} \frac1{2^{n+1}}, & k = a_n,\\ 0, & k\in\en\setminus A.\end{cases}$$
Further define the element $x^{*}\in E^*$ by $x^*=(u,0)$ (using the above representation). Then $\|x^*\|=1$, so $\sup x^*(C)\le 1$. Moreover,
$\rho(x)(x^*)=1$, hence $\sup x^*(C)=1$. Let $x^{**}\in\wscl C$ be such that
$x^{**}(x^*)=1$. Then $x^{**}=(\rho(y),0)$ for some $y\in\ell_\infty$. As $\|y\|\le 1$, we get $y|_A=1$. But then $d(y,c_0)=1$, hence $d(x^{**},C)\ge 1$.
So, $\Ja(C)\ge 1$ and the proof is completed.
\end{proof}

The above examples show that any of the inequalities from Corollary~\ref{cor:AllPossibleInequalities} can be strict, with one possible exception which is described in the following problem.

\begin{question} Let $E$ be a Banach space and $C\subset E$ a bounded convex set. Is then $\Ja(C)=\ck(C)$?
\end{question}

\section{The case of weak* angelic dual unit ball}

In this section we collect several results saying that under some additional conditions some of the inequalities from Corollary~\ref{cor:AllPossibleInequalities} become equalities.
The basic assumption will be that the dual unit ball $B_{E^*}$ is weak* angelic, i.e. that whenever $A\subset B_{E^*}$ and $x^*\in\wscl A$, there is a sequence in $A$ which weak* converges to $x^*$. Inspired by \cite{FHMZ} we introduce the following quantity. If $E$ is a Banach space and $H\subset E$ a bounded subset, we set
$$\gamma_0(H)=\sup\{|\lim_i \lim_j x^*_i(x_j)| : (x_j)\subset H, (x^*_i)\subset B_{E^*}, x^*_i \overset{w^*}{\to} 0\},$$
assuming the involved limits exist. It is clear that $\gamma_0(H)\le \gamma(H)$.
In general $\gamma_0$ is not an equivalent quantity to the other ones. Indeed, if $E=\ell_\infty$ and $C=B_E$, then $\gamma_0(C)=0$ by the Grothendieck property of $E$. But in case $B_{E^*}$ is angelic, we have the following:

\begin{thm}\label{thm-angelic} Let $E$ be a Banach space such that $B_{E^*}$ is weak* angelic.
\begin{itemize}
\item Let $H\subset E$ be any bounded subset Then we have:
$$\frac12\gamma(H)\le\gamma_0(H)=\Jae(H)=\cke(H)=\dh(\wscl H,E)\le\gamma(H).$$

\item Let
$C\subset E$ be any bounded convex subset. Then the following inequalities hold true:
\begin{multline*}\frac12\gamma(C)\le\gamma_0(C)=\Jae(C)=\ck_E(C)=\dh(\wscl C,E) \\ \le \Ja(C)\le\ck(C)\le\dh(\wscl C,C)\le\gamma(C).\end{multline*}
\end{itemize}
\end{thm}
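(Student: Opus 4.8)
The plan is to establish the new equalities involving $\gamma_0$ and then to deduce everything else from Corollary~\ref{cor:AllPossibleInequalities} together with the inequalities $\gamma(H)\le 2\Jae(H)$ and $\gamma_0(H)\le\gamma(H)$ that are already available. Since Corollary~\ref{cor:AllPossibleInequalities} gives the chain $\frac12\gamma(H)\le\Jae(H)\le\cke(H)\le\dh(\wscl H,E)\le\gamma(H)$ unconditionally, and the second part adds $\dh(\wscl C,E)\le\Ja(C)\le\ck(C)\le\dh(\wscl C,C)\le\gamma(C)$ for convex $C$, the only thing left to prove is that, under weak* angelicity of $B_{E^*}$, the chain from $\gamma_0$ up to $\dh(\wscl\cdot,E)$ collapses. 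Concretely it suffices to prove the single inequality $\dh(\wscl H,E)\le\gamma_0(H)$; combined with the trivial $\gamma_0(H)\le\gamma(H)$ and the already-proved $\gamma(H)\le 2\Jae(H)$, $\Jae(H)\le\cke(H)\le\dh(\wscl H,E)$, this forces $\gamma_0(H)=\Jae(H)=\cke(H)=\dh(\wscl H,E)$ in the first item, and the convex item follows by appending the extra convex inequalities from Corollary~\ref{cor:AllPossibleInequalities} (note $\ck_E(C)$ is squeezed between $\Jae(C)$ and $\dh(\wscl C,E)$, hence also equals them).

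So the core task is: \emph{if $B_{E^*}$ is weak* angelic, then $\dh(\wscl H,E)\le\gamma_0(H)$.} First I would fix $r>0$ with $\dh(\wscl H,E)>r$ and pick $x^{**}\in\wscl H$ with $\dist(x^{**},E)>r$. The aim is to produce sequences $(x_j)\subset H$ and $(x^*_i)\subset B_{E^*}$ with $x^*_i\xrightarrow{w^*}0$ and $|\lim_i\lim_j x^*_i(x_j)|\ge r$ (up to an arbitrarily small loss), which would give $\gamma_0(H)\ge r$. The construction mimics the interlacing argument in the proof of $\dh(\wscl C,C)\le\gamma(C)$ in Proposition~\ref{int-ineq-known}, but now using angelicity to arrange that the functionals weak* converge to $0$ rather than to some nonzero functional. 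The key point of angelicity here is the following well-known consequence: if $B_{E^*}$ is weak* angelic then a bounded set $A\subset E^*$ is weak* sequentially compact, and moreover every weak* cluster point of a bounded sequence is in fact a weak* limit of a subsequence. I would build $(x_j)$ and $(x^*_i)$ by induction so that (roughly) $x^*_i(x^{**})>r$ for all $i$ while $x^*_i(x_j)$ is small for $j<i$ — this is possible by Goldstine's theorem since $x^{**}\in\wscl H$ and $\dist(x^{**},E)>r$ gives a separating functional of norm one with $x^{***}(x^{**})>r+\sup_{x\in E}$-type estimates, exactly as in~\eqref{1}. Passing to subsequences via weak* sequential compactness of $(x^*_i)$ in $B_{E^*}$, the limit $g_0$ of $(x^*_i)$ satisfies $g_0(x^{**})$ is still controlled; the new ingredient is to replace $x^*_i$ by $x^*_i-g_0$, which is weak* null, and check that the double limit $\lim_i\lim_j(x^*_i-g_0)(x_j)$ still has modulus at least $r$ minus an $\epsilon$. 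Here one uses that $g_0\in E^*$ so $g_0(x_j)$ converges along a subsequence, and that the cross terms are negligible by the interlacing estimates.

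The step I expect to be the main obstacle is precisely the bookkeeping that makes the subtracted functional $g_0$ weak* null \emph{and} leaves the double limit bounded below: one must arrange the induction so that $g_0$ genuinely records the "diagonal" behaviour of the $x^*_i$ on the $x_j$, so that $(x^*_i-g_0)(x_j)$ tends to the full jump $r$ rather than being cancelled. This is where weak* angelicity is used twice — once to extract a weak* convergent subsequence of the functionals (so that $g_0$ is an honest limit, not merely a cluster point, which is needed to control $\lim_i g_0(x_j)$ via a diagonal argument), and implicitly to ensure $g_0\in E^*$ has a well-defined limit along $(x_j)$ after a further subsequence. Once $\dh(\wscl H,E)\le\gamma_0(H)$ is in hand, the remaining assertions are a purely formal combination of this inequality with Proposition~\ref{int-ineq-known} and Theorem~\ref{prop-pryce}, exactly as in the proof of Corollary~\ref{cor:AllPossibleInequalities}; in particular the convex case needs no new argument beyond noting that all the quantities from $\frac12\gamma(C)$ through $\dh(\wscl C,E)$ are now squeezed equal and the tail $\Ja(C)\le\ck(C)\le\dh(\wscl C,C)\le\gamma(C)$ is already known.
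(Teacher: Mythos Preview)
Your reduction contains a genuine logical gap. You claim that proving only $\dh(\wscl H,E)\le\gamma_0(H)$, together with the known chain $\Jae(H)\le\cke(H)\le\dh(\wscl H,E)$ and the estimates $\gamma_0(H)\le\gamma(H)\le 2\Jae(H)$, forces the equalities $\gamma_0(H)=\Jae(H)=\cke(H)=\dh(\wscl H,E)$. It does not. From your hypotheses one obtains only
\[
\Jae(H)\le\cke(H)\le\dh(\wscl H,E)\le\gamma_0(H)\le\gamma(H)\le 2\,\Jae(H),
\]
which traps all four quantities within a factor of $2$ but does not collapse them. Nothing rules out, say, $\Jae(H)=1$ and $\gamma_0(H)=\tfrac32$. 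To close the loop you must also prove the reverse inequality $\gamma_0(H)\le\Jae(H)$, and this is precisely the second ingredient in the paper's proof.

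The paper establishes both pieces. The inequality $\dh(\wscl H,E)\le\gamma_0(H)$ under angelicity is quoted from \cite[Proposition~14(ii)]{FHMZ}; your interlacing sketch for it is in the right spirit. The inequality $\gamma_0(H)\le\Jae(H)$ is obtained by rerunning the Pryce argument of Theorem~\ref{prop-pryce} starting from a sequence $(f_i)\subset B_{E^*}$ that is weak* null: since $\limsup_i f_i=\liminf_i f_i=0$, condition~(\ref{pecko}) is automatic, the $g_n$'s produced by Claim~\ref{cl3} are again weak* null so $g_0=0$, and the final estimate becomes $\liminf_n g_n(u)\ge r'$; for any $v\in E$ one then has $g_n(v)\to 0$, whence $r'\le\liminf_n g_n(u-v)\le\|u-v\|$, giving $\Jae(H)\ge\gamma_0(H)$ with no loss of a factor $2$. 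Note that this half does not use angelicity at all. Your proposal omits this entire step, so as written it cannot yield the claimed equalities.
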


\begin{proof} The second part follows from the first part and  Corollary~\ref{cor:AllPossibleInequalities}.
As for the first part, in view of Corollary~\ref{cor:AllPossibleInequalities} it is enough to prove that $\Jae(H)\ge \gamma_0(H)$ and $\dh(\wscl H,E)\le\gamma_0(H)$. The second inequality follows from \cite[Proposition 14(ii)]{FHMZ}.

The first inequality follows from the proof of Theorem~\ref{prop-pryce}.
In fact, the angelicity assumption is not needed here.
Let us indicate the necessary changes:

Suppose that $\gamma_0(H)>r$. The space $F$ is not needed, but define the sublinear functional $p$ on $E^*$ by $p(f)=\sup f(H)$ for $f\in E^*$.
Fix a sequence $(z_j)$ in $H$ and $(f_i)$ in $B_{E^*}$ such that
$f_i$ weak* converge to $0$ and $\lim_i\lim_j f_i(z_j)>r$ and all the limits involved exist. Without loss of generality suppose that:
\begin{center}{\it 
for every  $i\in \en$,there is  $j_0\in\en$, such that for all $j\ge j_0$ we have $f_i(z_j)>r.$}
\end{center}
As $\limsup_i f_i=\liminf_i f_i=0$, we get the assertion (\ref{pecko}) for free.
We define $K_n$ for $n\in\en$ in the same way. The assertion (\ref{kjedna}) then says that $p(f)>r$ for all $f\in K_1$. Fix any $r'<r$ and a sequence $(\beta_n)$ of strictly positive numbers. Claim~\ref{cl3} now yield a sequence $(g_n)$ with $g_n\in K_n$ such that
$$ p\left(\sum_{i=1}^n \beta_i g_i\right)>\beta_n r'+p\left(\sum_{i=1}^{n-1} \beta_i g_i\right).$$
As $f_n$ weak* converge to $0$, $g_n$ weak* converge to $0$ as well. Thus $g_0=0$. Now, if the sequence $(\beta_n)$ quickly converges to $0$ (i.e., satisfies the same condition as in the original proof), we set
$g=\sum_{i=1}^\infty \beta_i g_i$.
Let $u\in\wscl H$ be an arbitrary point with $g(u)=\sup g(H)$.
By the final calculation we get $\liminf_n g_n(u)\ge r'$. If $v\in E$ is arbitrary, then $g_n(v)\to 0$, and thus
$$r'\le \liminf g_n(u)-\lim g_n(v)=\liminf g_n(u-v)\le \|u-v\|.$$
Thus $\Jae(H)\ge r'$, so $\Jae(H)\ge\gamma_0(H)$.
\end{proof}

Let us remark that the spaces from Examples~\ref{exa-c0},~\ref{exa-ell1} and~\ref{exa-c0omega} are separable and therefore they  have weak* angelic unit ball. It follows that in Theorem~\ref{thm-angelic} all the inequalities, with a possible exception of $\Ja(C)\le\ck(C)$, may be strict. {Note also that under the weaker assumption of the Banach space $E$ having Corson property $\cC$, it has been proved in~\cite[Proposition 2.6]{ang-cas1} that for any bounded set $H\subset E$ we have $\ck_E(H)=\dh(\wscl H,E)$.}

The following theorem shows that  all the quantities are equal in a very special case $E=c_0(\Gamma)$.

\begin{thm} Let $\Gamma$ be an arbitrary set and $E=c_0(\Gamma)$.
\begin{itemize}
	\item Let $H\subset E$ be a bounded set. Then we have:
	$$\gamma_0(H)=\Jae(H)=\cke(H)=\dh(\wscl H,E)=\gamma(H).$$

 \item Let $C\subset E$ be a convex bounded subset. Then we have:
$$\gamma_0(C)=\Jae(C)=\ck_E(C)=\dh(\wscl C,E) = \Ja(C)=\ck(C)=\dh(\wscl C,C)=\gamma(C).$$
\end{itemize}
\end{thm}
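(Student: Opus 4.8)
The plan is to show that the whole collapse follows from one new inequality, $\gamma(H)\le\gamma_0(H)$ for every bounded $H\subset c_0(\Gamma)$, and to prove this using the special geometry of $\ell_1(\Gamma)=c_0(\Gamma)^*$. Since $c_0(\Gamma)$ is weakly compactly generated (the weakly compact set $\{e_\gamma:\gamma\in\Gamma\}\cup\{0\}$ spans it densely), its dual unit ball is weak$^*$ angelic, so Theorem~\ref{thm-angelic} applies and already yields $\gamma_0(H)=\Jae(H)=\cke(H)=\dh(\wscl H,E)$ for bounded $H$ (and the corresponding chain for convex $C$); moreover $\gamma_0\le\gamma$ trivially. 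Hence, once $\gamma(H)\le\gamma_0(H)$ is known, every quantity occurring in the theorem lies between $\gamma_0(H)$ and $\gamma(H)=\gamma_0(H)$, and they all coincide. So the only real task is the inequality $\gamma(H)\le\gamma_0(H)$.

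First I would reduce to the separable case $E=c_0$. If $\gamma(H)>r$, pick $(x^*_m)\subset B_{\ell_1(\Gamma)}$ and $(x_n)\subset H$ with the iterated limits existing and $\lim_n\lim_m x^*_m(x_n)-\lim_m\lim_n x^*_m(x_n)>r$. These countably many vectors are supported on a countable set $\Gamma_1\subset\Gamma$, so we may regard $x^*_m\in B_{\ell_1(\Gamma_1)}$ and $x_n\in c_0(\Gamma_1)$ without changing any value $x^*_m(x_n)$. If, working inside $\ell_1(\Gamma_1)$, I can produce a weak$^*$-null sequence $(\phi_m)\subset B_{\ell_1(\Gamma_1)}$ with $|\lim_m\lim_n\phi_m(x_n)|>r$, then extending each $\phi_m$ by zero gives a weak$^*$-null sequence in $B_{\ell_1(\Gamma)}$ acting on the $x_n$ in exactly the same way, whence $\gamma_0(H)>r$. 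So it suffices to treat $c_0$.

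Now the core step. In $c_0$ the ball $B_{\ell_1}$ is weak$^*$ metrizable and compact, so after passing to a subsequence I may assume $x^*_m\overset{w^*}{\to}x^*\in B_{\ell_1}$ (and all the iterated limits persist); put $\phi_m=x^*_m-x^*$, which is weak$^*$ null. The key estimate is $\limsup_m\|\phi_m\|_1\le 1-\|x^*\|_1$: since $x^*_m(k)\to x^*(k)$ for each $k$ and $\bigl|\,|x^*_m(k)|-|\phi_m(k)|\,\bigr|\le|x^*(k)|$ with $\sum_k|x^*(k)|<\infty$, dominated convergence gives $\|x^*_m\|_1-\|\phi_m\|_1=\sum_k\bigl(|x^*_m(k)|-|\phi_m(k)|\bigr)\to\|x^*\|_1$, and $\|x^*_m\|_1\le1$. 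Consequently, after discarding finitely many indices (none, if $x^*=0$), $\phi_m\in B_{\ell_1}$ for every $m$. Finally, set $\ell_m=\lim_n x^*_m(x_n)$, $a=\lim_n\lim_m x^*_m(x_n)$, $b=\lim_m\lim_n x^*_m(x_n)$, so $a-b>r$. From $x^*_m\overset{w^*}{\to}x^*$ we get $\lim_m x^*_m(x_n)=x^*(x_n)$, hence $\lim_n x^*(x_n)=a$; therefore $\lim_n\phi_m(x_n)=\ell_m-a$ and $\lim_m\lim_n\phi_m(x_n)=b-a$, so $|\lim_m\lim_n\phi_m(x_n)|=a-b>r$. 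Thus $\gamma_0(H)\ge r$, and letting $r\uparrow\gamma(H)$ proves $\gamma(H)\le\gamma_0(H)$; the two displayed lists of equalities then follow by the squeezing of the first paragraph.

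The hard part is the $\ell_1$-estimate $\limsup_m\|x^*_m-x^*\|_1\le 1-\|x^*\|_1$ and the recognition that it is exactly what is needed: in a general Banach space subtracting off the weak$^*$ limit only gives $\|\phi_m\|\le2$, hence merely $\gamma_0(H)\ge\tfrac12\gamma(H)$, which is already known. This estimate — a Brezis--Lieb type identity for the $\ell_1$-norm under pointwise convergence — is the feature that distinguishes $c_0(\Gamma)$ and is precisely why the factor $\tfrac12$ can be removed here (and why the analogue fails, e.g., in $\ell_1$).
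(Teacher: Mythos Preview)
Your proposal is correct and follows the same overall strategy as the paper: reduce everything to the single inequality $\gamma(H)\le\gamma_0(H)$, take a weak$^*$-convergent subsequence of the functionals, subtract the limit, and use the special geometry of $\ell_1$ to keep the differences inside the unit ball. The paper also appeals to Theorem~\ref{thm-angelic} for the collapse once $\gamma_0=\gamma$ is known.

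There is one genuine technical difference worth recording. The paper proves only the weaker estimate $\limsup_m\|x_m^*-x^*\|\le 1$ (by a contradiction argument with a finite-support approximation of $x^*$), which then forces an auxiliary $(1+\eta)$-normalization trick: one starts from a gap $>r(1+\eta)$, divides $x_m^*-x^*$ by $1+\eta$ to land back in $B_{\ell_1}$, and recovers a gap $>r$. Your Brezis--Lieb computation gives the sharper statement $\limsup_m\|x_m^*-x^*\|_1\le 1-\|x^*\|_1$, so after dropping finitely many terms the differences are already in $B_{\ell_1}$ and no renormalization is needed; this is cleaner. Conversely, the paper works directly in $\ell_1(\Gamma)$ (using weak$^*$ sequential compactness of $B_{\ell_1(\Gamma)}$), so your preliminary reduction to countable $\Gamma$ is correct but unnecessary.
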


\begin{proof} It is enough to prove $\gamma_0(H)\ge \gamma(H)$. If $\gamma(H)=0$, this inequality is trivial. So, suppose that $\gamma(H)>0$.
Fix an arbitrary $r>0$ such that $\gamma(H)>0$. We find  sequences $(x_i)\subset H$, $(x^*_j)\subset B_{E^*}$ and $\eta>0$ such that
$$\lim_i \lim_j x_i^*(x_j)-\lim_j \lim_i x_i^* (x_j) > r(1+\eta),$$
where all the limits involved exist.
As $B_{E^*}$ is weak* sequentially compact, by passing to a subsequence
we may suppose that the sequence $(x_i^*)$ weak* converges to some $x^*\in B_{E^*}$. Then
$$\lim_i \lim_j (x_i^*-x^*)(x_j) > r(1+\eta).$$
We claim that
$$\limsup \|x_i^*-x^*\|\le 1.$$
Suppose not. Then, up to passing to a subsequence, we may suppose that there is $\delta>0$ such that $\|x_i^*-x^*\|\ge 1+\delta$ for each $i\in\en$. To proceed the proof we recall that $E^*$ is canonically identified with $\ell_1(\Gamma)$ an that the weak* topology on bounded sets coincides with the pointwise convergence topology. Using this identification we can find a finite set $F\subset \Gamma$ such that
$$\sum_{\gamma\in\Gamma\setminus F}|x^*(\gamma)|<\frac\delta3.$$
Further, as $x_i^*$ weak* converges to $x^*$, there is $i_0\in\en$ such that for each $i\ge i_0$ we have
$$\sum_{\gamma\in F}|x_i^*(\gamma)-x^*(\gamma)|<\frac\delta3.$$
Fix any $i\ge i_0$. Then we have:
\begin{align*}\|x_i^*\|&\ge \sum_{\gamma\in\Gamma\setminus F}|x_i^*(\gamma)|
\ge \sum_{\gamma\in\Gamma\setminus F}|x_i^*(\gamma)-x^*(\gamma)|
-\sum_{\gamma\in\Gamma\setminus F}|x^*(\gamma)|
\\ & =\|x_i^*-x^*\|-\sum_{\gamma\in F}|x_i^*(\gamma)-x^*(\gamma)|
-\sum_{\gamma\in\Gamma\setminus F}|x^*(\gamma)|
\\ 	& >1+\delta-\frac\delta3-\frac\delta3=1+\frac\delta3.
\end{align*}
This is a contradiction.

So, omitting finite number of elements, we can suppose that $\|x_i^*-x^*\|<1+\eta$ for all $i\in\en$. Set $y_i^*=\frac{x_i^*-x^*}{1+\eta}$.
Then $y_i^*\in B_{E^*}$, the sequence $(y_i^*)$ weak* converges to $0$ and
$$\lim_i \lim_j y_i^*(x_i)>r.$$
Thus $\gamma_0(H)\ge r$ and the proof is completed.
\end{proof}

The equalities $\ck_E(H)=\dh(\wscl H,E) =\gamma(H)$ in case $E=c_0$ and $H\subset E$ is a bounded subset follow also easily from \cite[Theorem 2.8]{kr-pr-sc}, see also \cite[Corollary 3.4.3]{ang-tesis}.


\end{document}